\documentclass[10pt,a4paper]{amsart}
\usepackage{amsmath, amssymb, amsfonts, amsthm, float, stmaryrd, epsfig}
\usepackage[abbrev,alphabetic]{amsrefs}
\usepackage[pdfborder={0 0 0 [0 0 ]},bookmarksdepth=3, bookmarksopen=true]{hyperref}
\usepackage[latin1]{inputenc}
\usepackage[capitalize]{cleveref}
\usepackage{color}
\usepackage[ps,all,arc,rotate]{xy}
\usepackage{bbm} 



\usepackage{stackengine,scalerel}
\stackMath


\hypersetup{
    colorlinks=true,
    linkcolor=black,
    citecolor=black,
    filecolor=black,
    urlcolor=black,
}




\numberwithin{figure}{section}
\numberwithin{table}{section}

\theoremstyle{plain}
\newtheorem{thm}{Theorem}[section]
\crefname{thm}{Theorem}{Theorems}
\newtheorem*{prop*}{Proposition}
\newtheorem*{thm*}{Theorem}
\newtheorem{prop}[thm]{Proposition}
\crefname{prop}{Proposition}{Propositions}
\newtheorem{lem}[thm]{Lemma}
\crefname{lem}{Lemma}{Lemmata}
\newtheorem{cor}[thm]{Corollary}
\crefname{cor}{Corollary}{Corollaries}

\crefname{conj}{Conjecture}{Conjectures}
\crefname{equation}{Equation}{Equations}

\theoremstyle{definition}

\newtheorem{dfn}[thm]{Definition}
\newtheorem*{dfn*}{Definition}

\theoremstyle{remark}
\newtheorem{rmk}[thm]{Remark}

\newtheoremstyle{maintheorem}{}{}{\itshape}{}{\bfseries}{}{.5em}{#1 \!\thmnote{\ #3}.}
\theoremstyle{maintheorem}

\makeatletter
\let\c@figure\c@thm
\let\c@table\c@thm
\makeatother
\crefname{figure}{Figure}{Figures}
\crefname{table}{Table}{Tables}


\newcommand{\SL}{\operatorname{SL}}

\newcommand{\PGL}{\operatorname{PGL}}
\newcommand{\PSL}{\operatorname{PSL}}

\newcommand{\id}{\operatorname{id}}
\newcommand{\im}{\operatorname{im}}
\newcommand{\supp}{\operatorname{supp}}

\newcommand{\typeF}[1]{\mathtt{F}_{#1}}
\newcommand{\typeFP}[1]{\mathtt{FP}_{#1}}
\newcommand{\typeFF}[1]{\mathtt{FF}_{#1}}

\def\C{\mathbb{C}}
\def\R{\mathbb{R}}

\def\Z{\mathbb{Z}}
\def\Q{\mathbb{Q}}

\def\1{\mathbbm{1}}

\def\F{\mathbb{F}}

\def\s-{\smallsetminus}

\def\iff{if and only if }

\newcommand{\PD}[2]{\operatorname{PD}^{#1}_{#2}}

\newcommand{\Hom}{\operatorname{Hom}}

\newcommand{\rk}{\operatorname{rk}}

\binoppenalty=\maxdimen
\relpenalty=\maxdimen


\newcounter{dawidcomments}
\newcommand{\dawid}[1]{\textbf{\color{red}(D\arabic{dawidcomments})} \marginpar{\scriptsize\raggedright\textbf{\color{red}(D\arabic{dawidcomments})Dawid:} #1}
\addtocounter{dawidcomments}{1}}


\author{Dawid Kielak}
\address{Dawid Kielak, Mathematical Institute, University of Oxford, Andrew Wiles Building,
Radcliffe Observatory Quarter,
Woodstock Road,
Oxford
OX2 6GG,
United Kingdom}
\email{kielak@maths.ox.ac.uk}

\author{Peter Kropholler}
\address{Peter Kropholler, Mathematical Sciences, University of Southampton,
Southampton SO17 1BJ, United Kingdom}
\email{P.H.Kropholler@southampton.ac.uk}

\title{Isoperimetric inequalities for Poincar\'e duality groups}

\begin{document}

\begin{abstract}
We show that every oriented $n$-dimensional Poincar\'e duality group over a $*$-ring $R$ is amenable or satisfies a linear homological isoperimetric inequality in dimension $n-1$. As an application, we prove the Tits alternative for such groups when $n=2$. We then deduce a new proof of the fact that when $n=2$ and  $R = \Z$ then the group in question is a surface group.
\end{abstract}

\maketitle
\section{Introduction}

The classification of $2$-dimensional Poincar\'e duality groups over $\Z$ is a milestone in group theory built on a careful case analysis of Eckmann's student M\"uller \cite{Mueller1979} and the subsequent joint work of Eckmann and M\"uller \cite{EckmannMueller1980}. The latter paper gives a characterization of such groups relying only on the Poincar\'e duality property and the assumption of positive first Betti number. A remarkable argument of Linnell shows that positive first Betti number is also a consequence of Poincar\'e duality, see \cites{EckmannLinnell1982,EckmannLinnell1983}.
Linnell's insight calls the Bass Conjecture, already established for groups of low cohomological dimension in various cases, and with positive first Betti number established it then follows quickly that $2$-dimensional Poincar\'e duality groups are HNN-extensions over a base that is free of finite rank. In sum, there is the following classfication theorem:

\begin{thm}
\label{classical thm}
Let $G$ be a group. Then the following are equivalent.
\begin{enumerate}
\item
$G$ is a $2$-dimensional Poincar\'e duality group over $\Z$.
\item
$G$ has a presentation of one of the following two kinds:
\begin{enumerate}
\item $G\cong\langle g_1\,\dots,g_{2n};\ \prod_{i=1}^n[g_{2i-1},g_{2i}]=1\rangle$ for some $n\geqslant 1$, or
\item $G\cong\langle g_1,\dots,g_n;\ \prod_{i=1}^n g_i^2=1\rangle$ for some $n\geqslant 2$.
\end{enumerate}
\item
$G$ is isomorphic to the fundamental group of a closed surface of genus $\geqslant 1$.
\end{enumerate}
\end{thm}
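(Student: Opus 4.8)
The plan is to prove (3)$\Rightarrow$(2)$\Rightarrow$(1)$\Rightarrow$(3); only the last implication has real content. For (3)$\Rightarrow$(2): a closed orientable surface of genus $n\geqslant 1$ has fundamental group given by the presentation in~(a), and a closed non-orientable surface other than $\mathbb{RP}^2$ has genus $n\geqslant 2$ and fundamental group given by the presentation in~(b) (here one reads (3) as excluding the sphere and $\mathbb{RP}^2$, whose fundamental groups are not $2$-dimensional Poincar\'e duality groups over $\Z$). For (2)$\Rightarrow$(1): in either case the presentation $2$-complex is the surface itself, which for the stated range of $n$ is aspherical, so $G$ is the fundamental group of an aspherical closed surface, and Poincar\'e duality for that surface shows $G$ is a $2$-dimensional Poincar\'e duality group over $\Z$---oriented in case~(a), non-oriented in case~(b). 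I say no more about these directions.

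So suppose $G$ is a $2$-dimensional Poincar\'e duality group over $\Z$. Then $G$ is of type $\mathrm{FP}$ over $\Z$, has $\mathrm{cd}_{\Z}G=2$, and is torsion-free (a nontrivial finite subgroup would have infinite $\Z$-cohomological dimension, contradicting $\mathrm{cd}_{\Z}G<\infty$). If $G$ is not orientable, I replace it by the kernel $G_0$ of its orientation character, an oriented $2$-dimensional Poincar\'e duality group over $\Z$ of index $2$ in $G$; it then suffices to prove that $G_0$ is a closed surface group, since $G$ is torsion-free and commensurable with a closed surface group and is therefore itself one. So I assume $G$ is oriented. By the work of Eckmann--M\"uller (\cite{EckmannMueller1980}; see the discussion in the introduction), a $2$-dimensional Poincar\'e duality group over $\Z$ with positive first Betti number is a closed surface group; hence the whole problem reduces to showing $b_1(G)>0$, i.e.\ to a new proof of Linnell's theorem.

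There is a quick route via $L^2$-Betti numbers: Atiyah's $L^2$-index theorem gives $\chi(G)=\chi^{(2)}(G)=-b_1^{(2)}(G)$, using $b_0^{(2)}(G)=0$ for the infinite group $G$ together with the $L^2$-Poincar\'e duality $b_2^{(2)}(G)=b_0^{(2)}(G)$, while rational Poincar\'e duality gives $\chi(G)=b_0(G)-b_1(G)+b_2(G)=2-b_1(G)$; therefore $b_1(G)=2+b_1^{(2)}(G)\geqslant 2$. The aim here, however, is to prove $b_1(G)>0$ without the Bass conjecture used in Linnell's original argument and without $L^2$-machinery, so I would instead invoke the main theorem of the paper: either $G$ is amenable, or $G$ satisfies a linear homological isoperimetric inequality in dimension $1$. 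In the amenable case, the structure of amenable $2$-dimensional Poincar\'e duality groups over $\Z$---equivalently, the amenable alternative in the Tits alternative proved here---forces $G\cong\Z^2$, the fundamental group of the torus, so $b_1(G)=2>0$. In the non-amenable case I would use the linear homological isoperimetric inequality in dimension $1$, together with Poincar\'e duality, to force $b_1(G)\neq 0$.

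This last step is the heart of the matter and the place where the new idea of the paper must be used. The natural mechanism is the duality (Matsumoto--Morita; L\"oh) between the linear homological filling inequality in dimension $1$ and the comparison map $H^2_b(G;\R)\to H^2(G;\R)$: for a $\mathrm{PD}_2$ group over $\R$ the target of this map is one-dimensional, and if $b_1(G)=0$ then $H^1(G;\R)=0$ too, so the exact sequence relating bounded and ordinary cohomology becomes very rigid; one then wants the isoperimetric inequality, combined with the non-amenability of $G$ (which prevents the relevant amenable coefficient module from degenerating), to collide with the non-vanishing of $H^2(G;\R)$ imposed by Poincar\'e duality. Making this collision precise---or, alternatively, extracting from the isoperimetric inequality an honest homological splitting of $G$, which would produce an epimorphism $G\to\Z$ directly---is the obstacle I expect to be hardest. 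Minor points still needing care are treating the amenable case without $L^2$-methods and the bootstrap from $G_0$ back to $G$.
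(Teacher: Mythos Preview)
Your non-amenable branch has a genuine gap, and it is also not the route the paper takes. You propose to extract $b_1(G)>0$ from the linear homological isoperimetric inequality via the Matsumoto--Morita/L\"oh bounded-cohomology mechanism, but you yourself flag that you cannot make the ``collision'' precise. Surjectivity of $H^2_b(G;\R)\to H^2(G;\R)$ together with $H^2(G;\R)\cong\R$ does not by itself force $b_1(G)\neq 0$; hyperbolic groups with $b_1=0$ abound, so you would need to feed in Poincar\'e duality in a much sharper way than you indicate, and no such argument is supplied. Likewise, your wish to handle the amenable case without $L^2$ input is left as a ``minor point''; the paper does \emph{not} avoid $L^2$ there---its Proposition~4.4 uses L\"uck's vanishing of the $L^2$-Euler characteristic for infinite amenable groups to obtain $b_1(G)\geqslant 1$.

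More importantly, your global strategy---reduce to $b_1(G)>0$ and then invoke Eckmann--M\"uller---is precisely the classical Linnell route the paper is offering an alternative to. The paper never cites Eckmann--M\"uller in its proof and never establishes $b_1>0$ in the hyperbolic case. Instead, from the linear isoperimetric inequality it first proves (Proposition~4.1) that $G$ is Gromov hyperbolic; then Bestvina's identification of the boundary of a hyperbolic $\PD{2}{R}$ group as a circle, the fact that the boundary action is a convergence action, and the convergence group theorem of Tukia, Gabai, and Casson--Jungreis yield that $G$ (being torsion-free) is a closed Fuchsian group, hence a surface group, directly. Orientability of the surface is read off from the $G$-action on $H^1(\partial G;\Z)\cong H^2(G;\Z G)$. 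So the paper replaces your missing bounded-cohomology step, and the entire Eckmann--M\"uller case analysis, by the convergence group theorem; your reduction to the orientable index-$2$ subgroup is fine, but everything downstream of it diverges from what the paper actually does.
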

Of course the equivalence of (2) and (3) and the implication (3)$\Rightarrow$(1) are classical. The Eckmann--M\"uller strategy for proving the implication (1)$\Rightarrow$(2)  involves an intricate analysis of cases which runs, in spirit, parallel to the classical case analysis required to prove (3)$\Rightarrow$(2).

Subsequently it has been of interest to study groups which satisfy Poincar\'e duality over a field. For example, if $G$ is a $3$-dimensional Poincar\'e duality group with non-trivial centre $Z$ (as arises for the fundamental group of a closed Seifert fibered $3$-manifold) then one might be able to deduce, in favourable circumstances, that $G/Z$ satisfies Poincar\'e duality over $\Q$ even though one cannot expect the result to hold over $\Z$ because, for example, $G/Z$ might not be torsion-free. Therefore an important next step is found in the work of Bowditch \cite{Bowditch2004} in which a much more general result is proved of which the following is a special case.

\begin{thm}
Suppose that $G$ is a $2$-dimensional Poincar\'e duality group over some field $\F$. Then $G$ is a virtual surface group.
\end{thm}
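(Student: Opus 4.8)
The plan is to deduce the statement from a Tits alternative for $2$-dimensional Poincar\'e duality groups over $\F$, namely that such a $G$ is either amenable or contains a non-abelian free subgroup. In the first case $G$ is amenable, and I would use the Tits alternative in its sharper form, which identifies this alternative with being virtually solvable; then Bieri's structure theory for solvable duality groups shows that $G$ is virtually polycyclic, and since $G$ is $2$-dimensional it has Hirsch length $2$, so it is virtually $\Z^2$, the fundamental group of a torus or of a Klein bottle. I expect this degenerate case to be comparatively routine.

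The substance lies in the second case, where $G$ contains a non-abelian free subgroup, and here I would proceed in three steps. First, I would show that $G$ is word-hyperbolic: a non-amenable $2$-dimensional Poincar\'e duality group over $\F$ satisfies a linear homological isoperimetric inequality in dimension $1$, and, granting that $G$ is finitely presented, results of Gersten promote this to genuine Gromov hyperbolicity. Second, I would identify the Gromov boundary: the duality hypothesis gives $H^1(G;\F G)=0$, so $G$ is one-ended and $\partial G$ is a Peano continuum, that is, a nonempty connected locally connected finite-dimensional metrizable compactum; a Bestvina--Mess-type argument then converts Poincar\'e duality over $\F$ into the assertion that $\partial G$ is an $\F$-homology $1$-manifold with the $\F$-homology of the circle, and the only Peano continuum with that property is $S^1$. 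Third, I would conclude: $G$ acts on $\partial G\cong S^1$ as a uniform convergence group, so by the solution of the convergence group conjecture (Tukia, Gabai, Casson--Jungreis) $G$ is topologically conjugate to a cocompact Fuchsian group, hence, by Selberg's lemma, a virtual surface group. An alternative endgame, once $G$ is known to be virtually torsion-free, would be to show that a torsion-free finite-index subgroup is a $2$-dimensional Poincar\'e duality group over $\Z$ and then to invoke \cref{classical thm}; but this change of rings appears to be no easier than the route above.

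I expect the second case to contain the real difficulty, concentrated in two points. The first is finiteness over $\Z$: a priori $G$ is only of type $\typeFP{2}$ over $\F$, and type $\typeFP{2}$ over $\F$ does not imply type $\typeFP{2}$ over $\Z$, let alone finite presentability, for general groups, so one has to exploit the duality hypothesis to establish the integral finiteness that Gersten's result and the Bestvina--Mess argument require; a related point is to run the boundary computation in the presence of torsion, since hyperbolic groups are not known to be virtually torsion-free. The second is the boundary computation proper: duality over $\F$ gives only \v{C}ech-(co)homological control of $\partial G$, whereas the convergence group machinery wants $\partial G$ to be a genuine topological $1$-manifold, so one must pin down the local topology of the boundary, not merely its (co)homology. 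A more geometric route, essentially Bowditch's, would replace hyperbolicity by a direct analysis of the topology at infinity of a coarse Poincar\'e duality plane on which $G$ acts geometrically, but the same two obstacles reappear there.
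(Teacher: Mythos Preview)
The paper does not prove this statement: it is quoted as a theorem of Bowditch, with the explicit remark that his argument runs to 51 pages and that excluding infinite torsion groups and Tarski monsters is already a considerable achievement. The paper's own contribution is the weaker \cref{our thm} (the amenable/non-elementary hyperbolic dichotomy), together with a complete classification only when $R=\Z$.

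That said, your outline is close in spirit to what the paper actually does in the $\Z$ case, and it is worth separating what goes through from what does not. The hyperbolic branch is essentially the paper's argument: \cref{main thm} together with \cref{linear ineq} give hyperbolicity from non-amenability using only type $\typeFP{2}$ over $R$ (so your worry about needing finite presentability is misplaced --- the paper's \cref{linear ineq} does not require it), and then \cref{hyp case}, via Bestvina's boundary computation and the convergence group theorem, shows that any hyperbolic oriented $\PD 2 R$ group is virtually a closed Fuchsian group. This handles the hyperbolic case over any field.

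The genuine gap is your amenable branch. You invoke a ``sharper form'' of the Tits alternative identifying amenability with virtual solvability; no such implication is available here, and this is precisely where Tarski monsters and other amenable infinite torsion groups lurk --- the very obstacles the paper singles out as the heart of Bowditch's work. The paper's own amenable argument (\cref{amen case}) is specific to $R=\Z$: it uses the vanishing of the $L^2$-Euler characteristic for infinite amenable groups to force $\dim_\C H_1(G;\C) \geqslant 1$, hence a surjection $G \to \Z$, after which Strebel and Stallings finish. Over an arbitrary field, particularly one of positive characteristic, there is no obvious substitute for this step, and you have not supplied one. Without it your amenable case collapses, and the proposal does not recover Bowditch's theorem.
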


Bowditch necessarily has to deal with some formidable difficulties and it is a considerable achievement even to be able to exclude infinite torsion groups or Tarski monsters from the story, and his 51 page paper is no light reading matter. 
We mention that Bowditch's work has been extended by Kapovich and Kleiner \cite{KapovichKleiner2004}.
Looking back from the present day perspective, there is some attraction to researching if modern methods could be used to simplify or provide new insights.

In this paper we provide a very short and quick argument for the following. While this does not come close to Bowditch's definitive result for commutative rings, given the modern day interest in both the class of hyperbolic groups and the class of amenable groups it may be of interest to see a succinct and direct argument even in the commutative case.

\begin{thm}
\label{our thm}
Suppose that $G$ is a $2$-dimensional Poincar\'e duality group over a commutative ring $R$, or an oriented $2$-dimensional Poincar\'e duality group over a $*$-ring $R$. Then either $G$ is non-elementary hyperbolic or $G$ is amenable.
\end{thm}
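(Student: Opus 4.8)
The plan is to combine the amenable-versus-isoperimetric dichotomy of the main theorem, specialised to $n=2$, with a homological criterion for Gromov-hyperbolicity. A $2$-dimensional Poincar\'e duality group over $R$ is of type $\typeFP{2}$ over $R$ (indeed of type $\typeFP{\infty}$), so the main theorem applies --- the commutative case being reduced to, or run in parallel with, the oriented $*$-ring case --- and it returns one of two conclusions. If $G$ is amenable, then the second alternative of the statement already holds and we are finished. So we may and do assume that $G$ satisfies a linear homological isoperimetric inequality in dimension $1$.

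The crux is to upgrade this homological inequality to genuine word-hyperbolicity of $G$. For this I would appeal to the fact (in the spirit of Gersten) that a group of type $\typeFP{2}$ satisfying a linear homological isoperimetric inequality in dimension $1$ is word-hyperbolic; since word-hyperbolic groups are finitely presented, this also shows along the way that $G$ is finitely presented. Granting it, $G$ is word-hyperbolic, and a word-hyperbolic group is either elementary --- finite or two-ended, hence amenable, so the second alternative of the statement holds --- or non-elementary, which is the first. One can in fact dispense with this last case-split by invoking Poincar\'e duality directly: a $\PD{2}{R}$ group $G$ has $H^{0}(G;RG)=0$ and $H^{1}(G;RG)=0$, whereas a finite group has $(RG)^{G}\neq 0$ and a two-ended group has $H^{1}(G;RG)\cong R$; as $R\neq 0$, our $G$ is neither finite nor two-ended, so once it is word-hyperbolic it is automatically non-elementary hyperbolic.

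I expect the principal difficulty to be exactly the upgrade in the second paragraph. One must verify that the homological isoperimetric inequality handed over by the main theorem --- which is an inequality over the coefficient ring $R$, a ring that may carry torsion or a nontrivial involution rather than being $\Z$ or a field --- genuinely activates the hyperbolicity criterion, and such criteria are normally stated over $\Z$ or over a field, so it may be necessary first to change coefficients or to adapt the criterion. There is also the point that hyperbolicity criteria are ordinarily phrased for finitely presented groups, while a priori we know only that $G$ is of type $\typeFP{\infty}$; this is not a vicious circle, since if the criterion is set up homologically (for groups of type $\typeFP{2}$) then finite presentability comes out as a consequence --- and, specialised to $R=\Z$, this reproves that $2$-dimensional Poincar\'e duality groups are finitely presented without going through Linnell's argument.
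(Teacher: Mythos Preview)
Your proposal is correct and follows exactly the paper's route: apply \cref{main thm} with $n=2$ to obtain either amenability or a linear $R$-homological isoperimetric inequality in dimension~$1$, and then invoke the hyperbolicity criterion (\cref{linear ineq}) to conclude that $G$ is Gromov hyperbolic; elementary hyperbolic groups are virtually cyclic and hence amenable, so the dichotomy follows. The concerns you flag in your last paragraph are precisely what the paper handles by proving \cref{linear ineq} for an arbitrary nonzero ring $R$ and for groups that are merely of type $\typeFP{2}$ over $R$ (adapting the Bridson--Haefliger thin-triangles argument to the homological setting), so no change of coefficients and no a~priori finite presentability are needed.
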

(See \cref{* def} for the definition of a $*$-ring.)

Of course, the importance of this is that in the case of hyperbolic or amenable groups one has a chance of rather more direct strategies of completing the original Eckmann--M\"uller analysis and showing directly that these groups are surface groups, at least when $R = \Z$. In this article we focus on the above theorem and we deduce it from a result about Poincar\'e duality groups of arbitrary dimension that concerns
homological isoperimetric inequalities. We then proceed to outline a relatively short proof of a variant of \cref{classical thm} following from \cref{our thm}.

\subsection*{Acknowledgements}
The authors would like to thank Yuri Santos R\^{e}go and Stefan Witzel for a helpful discussion concerning arithmetic groups, and Robert Kropholler for useful observations regarding the formulation of Proposition 4.1; they would also like to thank the referee for significantly improving the presentation and accuracy of the article.

The first author was partly supported by a grant from the German Science Foundation (DFG) within the Priority Programme SPP2026 `Geometry at Infinity'. This work has received
funding from the European Research Council (ERC) under the European Union's Horizon 2020 research and innovation programme, Grant
agreement No.\ 850930.

\section{Homological isoperimetric inequalities}

Throughout, $G$ denotes a discrete group, $R$ denotes an associative  unital ring with at least $2$ elements, and $RG$ denotes the usual group ring of $G$ with coefficients in $R$. 
All modules are left modules.



\begin{dfn}
A projective resolution $C_\bullet = (C_k, \partial_k)$ of the trivial $RG$-module $R$ is \emph{$n$-admissible} \iff
 $C_{n}$ and $C_{n+1}$ are finitely generated free $RG$-modules equipped with free bases.
 
 Since the bases of $C_n$ and $C_{n+1}$ are fixed, we will treat $n$ and $n+1$ chains as vectors and the differential $\partial_{n+1} \colon C_{n+1} \to C_n$ as a matrix over $RG$.
\end{dfn}

Note that every group of type $\typeFP{n+1}$ over $R$ admits an $n$-admissible resolution of $R$, (for details see \cite{Brown1982}*{VIII Prop.4.3}).

\begin{dfn}
 For an element $x \in R G$ we define $\vert x \vert$ to be the cardinality of the support of $x$. For a finite matrix $A = (a_{ij})_{i,j}$ over $RG$ we define $\vert A \vert = \sum_{i,j} \vert a_{ij} \vert$; we treat vectors as special cases of matrices.
\end{dfn}

\begin{dfn}[Homological isoperimetric inequality]
 Let $C_\bullet$ be an $n$-admissible projective resolution of the trivial $RG$-module $R$.
  We say that $C_\bullet$ satisfies a \emph{linear isoperimetric inequality in dimension $n$} \iff there exists $\kappa \geqslant 0$ such that for every  $n$-boundary $\gamma \in C_n$ there exists an $(n+1)$-chain $d
\in C_{n+1}$ such that
 \[
  \partial d = \gamma \textrm{ and }\vert d \vert \leqslant \kappa \vert \gamma \vert
 \]

We say that a group $G$ satisfies a \emph{linear $R$-homological isoperimetric inequality in dimension $n$} \iff it admits an $n$-admissible resolution of the trivial $RG$-module $R$ which satisfies a linear isoperimetric inequality in dimension $n$.
\end{dfn}

The following lemma shows that this property is independent of the resolution used.

\begin{lem}
\label{independence}
Let $C_\bullet$ and $C'_\bullet$ be two $n$-admissible resolutions of the trivial $RG$-module $R$. If $C_\bullet$ satisfies a linear isoperimetric inequality in dimension $n$, then so does $C'_\bullet$.
\end{lem}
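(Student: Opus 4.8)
The plan is to use the standard fact from homological algebra that any two projective resolutions of the same module are chain homotopy equivalent, and then to observe that all the chain maps and homotopies involved can be taken to be given by finite matrices over $RG$, so that they are bounded with respect to the norm $\vert \cdot \vert$. The point is that "being quasi-isometric at the level of boundaries" is preserved under such maps.

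First I would construct, by the fundamental lemma of homological algebra, $RG$-chain maps $f_\bullet \colon C_\bullet \to C'_\bullet$ and $g_\bullet \colon C'_\bullet \to C_\bullet$ lifting the identity on $R$, together with a chain homotopy $h_\bullet$ between $g_\bullet f_\bullet$ and $\id_{C_\bullet}$. Since $C_n, C_{n+1}, C'_n, C'_{n+1}$ are all finitely generated free with chosen bases, the relevant components $f_n, f_{n+1}, g_n, g_{n+1}, h_{n-1}, h_n$ are matrices over $RG$ with finitely many entries; let $M$ be a common bound so that $\vert f_n x\vert, \vert g_n x \vert \leqslant M \vert x\vert$ and similarly in dimension $n+1$, and $\vert h_{n-1} x\vert \leqslant M\vert x\vert$, $\vert h_n x \vert \leqslant M \vert x\vert$ — here I use submultiplicativity of $\vert\cdot\vert$ under matrix multiplication, i.e. $\vert Ax\vert \leqslant \vert A\vert\,\vert x\vert$, which follows since $\vert ab\vert \leqslant \vert a\vert\,\vert b\vert$ for $a,b \in RG$ and $\vert a+b\vert \leqslant \vert a\vert + \vert b\vert$.

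Now suppose $\gamma' \in C'_n$ is an $n$-boundary, say $\gamma' = \partial' c'$ for some $c' \in C'_{n+1}$. Then $g_n(\gamma') = g_n \partial' c' = \partial g_{n+1}(c')$ is an $n$-boundary in $C_n$, and $\vert g_n(\gamma')\vert \leqslant M \vert \gamma'\vert$. Applying the isoperimetric inequality for $C_\bullet$, there is $d \in C_{n+1}$ with $\partial d = g_n(\gamma')$ and $\vert d\vert \leqslant \kappa \vert g_n(\gamma')\vert \leqslant \kappa M \vert \gamma'\vert$. The chain homotopy identity in dimension $n$ reads $g_n f_n - \id = \partial h_n + h_{n-1}\partial$ on $C_n$; applying both sides to $\gamma'$... wait — I need a boundary in $C'_\bullet$, so I should instead push $d$ forward by $f$ and correct using the homotopy on the $C'$ side. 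Let me set $e' := f_{n+1}(d) + h'_n(\gamma')$, where $h'_\bullet$ is a homotopy between $f_\bullet g_\bullet$ and $\id_{C'_\bullet}$, again given by finite matrices with norm bounded by (an enlarged) $M$. Then
\[
 \partial' e' = \partial' f_{n+1}(d) + \partial' h'_n(\gamma') = f_n \partial d + \bigl(\gamma' - f_n g_n(\gamma') - h'_{n-1}\partial'\gamma'\bigr),
\]
and since $\partial'\gamma' = 0$ and $\partial d = g_n(\gamma')$, the terms $f_n g_n(\gamma')$ cancel, leaving $\partial' e' = \gamma'$. Finally $\vert e'\vert \leqslant \vert f_{n+1}(d)\vert + \vert h'_n(\gamma')\vert \leqslant M\vert d\vert + M\vert \gamma'\vert \leqslant (\kappa M^2 + M)\vert \gamma'\vert$, so $C'_\bullet$ satisfies a linear isoperimetric inequality in dimension $n$ with constant $\kappa' = \kappa M^2 + M$.

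The only mild subtlety — and the step I would be most careful about — is the bookkeeping with the homotopies: one must remember that $\partial$ on $C_\bullet$ in this range is not assumed to be a matrix of bounded norm (only $C_n, C_{n+1}$ are assumed finitely generated free with bases), so one cannot bound $\vert \partial h_n(\gamma')\vert$ directly; the argument above is arranged precisely so that $\partial$ is only ever applied to $d$ (where the result is prescribed) and inside the homotopy identities (where it is cancelled algebraically), never estimated. Everything else is a routine application of submultiplicativity of $\vert\cdot\vert$ and the existence and uniqueness up to homotopy of lifts.
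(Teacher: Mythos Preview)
Your proof is correct and follows essentially the same route as the paper: push the boundary across via one chain map, fill it using the linear inequality for $C_\bullet$, push the filler back, and correct with the chain homotopy on the $C'_\bullet$ side, using that the relevant maps in degrees $n$ and $n+1$ are finite matrices over $RG$ and hence bounded for $|\cdot|$. One small cleanup: in your setup paragraph you list $h_{n-1}\colon C_{n-1}\to C_n$ among the ``finite matrices'', but $C_{n-1}$ is not assumed finitely generated free; fortunately your actual argument never estimates $h_{n-1}$ (nor $h'_{n-1}$, since $\partial'\gamma'=0$), so this does not affect the proof.
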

\begin{proof}
Since $C_\bullet$ and $C'_\bullet$ are both resolutions of $R$, there exist $RG$-chain maps $\xi \colon C_\bullet \to C'_\bullet$ and $\zeta \colon C'_\bullet \to C_\bullet$ such that $\xi \circ \zeta$ is chain homotopic to the identity via an $RG$-chain homotopy $h$.

Let $\gamma \in C'_{n}$ be a boundary. Since $\zeta$ is a chain map, $\zeta(\gamma)$ is also a boundary. Moreover, since $C'_{n}$ and $C_{n}$ are finitely generated free $RG$-modules, the map $\zeta_n \colon C'_n \to C_n$ can be represented by a finite matrix $Z$ over $R G$, and we immediately see that
\[
| \zeta(\gamma)| \leqslant |Z| | \gamma |
\]
Now $C_\bullet$ satisfies a linear isoperimetric inequality in dimension $n$ by assumption. Let $\kappa$ be the constant given by the definition, and let $d \in C_{n+1}$ be such that
\[
\partial d = \zeta(\gamma) \textrm{ and } \vert d \vert \leqslant \kappa \vert \zeta(\gamma) \vert \leqslant \kappa |Z| | \gamma |
\]
Now let $d' = \xi_{n+1}(d) \in C'_{n+1}$. Observe that
\[
\partial d' = \partial \xi(d) = \xi (\partial d) = \xi \circ \zeta(\gamma) = \gamma - \partial h(\gamma)
\]
Observe also that $\xi_{n}$ and $h_{n}$ 
are represented by finite matrices over $R G$; let us denote the matrices by $X$ and $H$ 
 respectively. We have
\[
| d' +  h(\gamma)| \leqslant |  d'| + | h(\gamma)| \leqslant |X| |d| + |H| |\gamma| \leqslant \big(\kappa|X||Z| + |H|\big) |\gamma|
\]
which proves the claim.
\end{proof}

Note that 
our definition is \emph{not} the usual notion of homological isoperimetric inequality:  Mineyev~\cites{Mineyev2000,Mineyev2002}, and Groves--Manning \cite{GrovesManning2008}*{Section 2.7} consider only the cases of $R \in \{\Z, \Q, \R, \C\}$, and use the usual absolute value on $R$ and the resulting $L^1$-norm on $R G$. Gersten~\cite{Gersten1998}*{Section 13} however does consider all rings $R$ endowed with an abelian group norm. Our notion of $\vert \cdot \vert$ is such a norm, since such norms are not required to be $\Z$-multiplicative, and hence we are working with a particular example of what Gersten allows.

Mineyev~\cite{Mineyev2000} and Lang~\cite{Lang2000} showed that every hyperbolic group satisfies homological linear isoperimetric inequalities in all dimensions when the coefficients lie in $\R$ or $\Z$, respectively (note that Lang uses yet another definition of isoperimetric inequalities). Thus, the reader is invited to think of the linear inequalities as some (very weak) form of negative curvature.

\smallskip

Before proceeding further, we will need a simple observation about non-amenable groups.

\begin{lem}
\label{amen}
Let $G$ be a group with a finite generating set $\mathcal S = \{s_1, \dots, s_m\}$. Let $\partial \colon R G \to R G^m$ be given by the $m \times 1$ matrix $(1-r_i s_i)_i$ where $r_i \in R$ for every $i$. If $G$ is not amenable, then there exists $\kappa \geqslant 0$ such that for every $\gamma \in \im \partial$ and every $d \in R G$ with $\partial(d) = \gamma$ we have
\[
 \vert d \vert \leqslant \kappa \vert \gamma \vert
\]
\end{lem}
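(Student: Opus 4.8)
The plan is to reformulate the sought inequality as a Følner-type statement about finite subsets of $G$. Given $d \in RG$ with $\partial(d) = \gamma$, write $\Omega = \supp(d) \subseteq G$, so that $\vert d \vert = \vert \Omega \vert$; the case $\Omega = \emptyset$ (equivalently $d = 0$, $\gamma = 0$) is trivial, so assume $\Omega$ is finite and nonempty. It then suffices to produce $\kappa \geqslant 0$, independent of $d$, with $\vert \gamma \vert \geqslant \kappa^{-1} \vert \Omega \vert$, because $\vert \gamma \vert = \sum_{i=1}^m \vert (1 - r_i s_i) d \vert$.

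The key point is a lower bound $\vert (1 - r_i s_i) d \vert \geqslant \vert \Omega \setminus s_i \Omega \vert$ for each $i$. Writing $d = \sum_{g} d_g g$, one has
\[
(1 - r_i s_i) d = \sum_{g} \bigl(d_g - r_i d_{s_i^{-1} g}\bigr)\, g .
\]
If $g \in \Omega$ but $g \notin s_i \Omega$ — equivalently $s_i^{-1} g \notin \Omega$, so $d_{s_i^{-1} g} = 0$ — then the coefficient of $g$ above is $d_g \neq 0$, hence $g \in \supp\bigl((1 - r_i s_i) d\bigr)$. Thus $\Omega \setminus s_i \Omega \subseteq \supp\bigl((1 - r_i s_i)d\bigr)$, giving the claimed bound. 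Note this argument only detects coefficients that are visibly nonzero, so it is insensitive to zero divisors in $R$ and to the particular values $r_i \in R$. Summing over $i$ yields $\vert \gamma \vert \geqslant \sum_{i=1}^m \vert \Omega \setminus s_i \Omega \vert$, and since $\vert s_i \Omega \vert = \vert \Omega \vert$ we may rewrite the right-hand side as $\sum_{i=1}^m \vert s_i \Omega \setminus \Omega \vert$.

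Finally I would invoke the Følner criterion in contrapositive form: as $G$ is non-amenable and $\mathcal{S}$ is a finite generating set, there is $\varepsilon > 0$ such that every finite nonempty $\Omega \subseteq G$ satisfies $\sum_{i=1}^m \vert s_i \Omega \setminus \Omega \vert \geqslant \varepsilon \vert \Omega \vert$. Combining this with the previous paragraph gives $\vert \gamma \vert \geqslant \varepsilon \vert \Omega \vert = \varepsilon \vert d \vert$, so $\kappa = \varepsilon^{-1}$ works. There is no genuine obstacle here; the only things to get right are the bookkeeping that the "outward boundary" of $\Omega$ survives into $\supp\bigl((1 - r_i s_i)d\bigr)$, and invoking the Følner characterization with care about the orientation of the (not necessarily symmetric) generating set $\mathcal{S}$.
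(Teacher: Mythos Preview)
Your proposal is correct and follows essentially the same approach as the paper: both arguments take $F=\supp(d)$, observe that each ``boundary'' element of $F$ survives in the support of some coordinate $(1-r_is_i)d$, and then invoke the contrapositive of the F\o{}lner criterion to bound $|F|$ linearly by $|\gamma|$. The only cosmetic difference is that you sum the bounds $|\Omega\setminus s_i\Omega|$ over $i$ and rewrite them as $|s_i\Omega\setminus\Omega|$, whereas the paper works directly with the inner boundary $B(F)=\{g\in F: s_i^{-1}g\notin F\text{ for some }i\}$ and appeals to F\o{}lner in that form; these are equivalent formulations.
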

\begin{proof}
 Assume that $G$ is not amenable; the (negation of the) F\o{}lner criterion tells us that there exists $\epsilon > 0$ such that for every finite subset $F \subseteq G$ we have $\vert B(F) \vert \geqslant \epsilon \vert F \vert$ where
 \[
 B(F) = \big\{ g \in F \mid g{s_i}^{-1} \not\in F \textrm{ for some } i \big\}
 \]
 ($B(F)$ is the \emph{boundary} of $F$).

 Now, take $d \in R G$ and set $\gamma = \partial(d)$, as in the statement. Let $F$ be the support of $d$. Every element of $B(F)$ appears in the support of at least one of the entries of $\partial(d)$, and therefore we have $\vert \gamma \vert \geqslant \epsilon \vert d \vert$. Setting $\kappa = \frac 1 \epsilon$ finishes the proof.
\end{proof}

\section{Poincar\'e duality groups}

The following is an extension of the definition of Poincar\'e duality groups due to Johnson--Wall \cite{JohnsonWall1972}; note that we do not assume $G$ to be finitely presented.

\begin{dfn}
A group $G$ is called a \emph{Poincar\'e duality} group of dimension $n \geqslant 1$ over $R$, or a $\PD n R$ group, \iff
$G$ is of type $\typeFP{}$ over $R$ and we have
\[
H^k(G; RG) = \left\{ \begin{array}{ll} R & \textrm{ if } k=n \\ 0 & \textrm{ otherwise} \end{array} \right.
\]
where the equality holds as  $R$-modules.

The $\PD n R$ group is \emph{oriented} \iff $H^k(G; RG) = R$ is the trivial $RG$-module.
\end{dfn}

It is classical (and can be proven along the lines of \cite{Brown1982}*{Proposition 6.7}) that the dimension $n$ coincides with the cohomological dimension of $G$ over $R$.

\begin{dfn}
\label{* def}
The ring $R$ is a \emph{$*$-ring} \iff it comes equipped with an involutive antiautomorphism $R \to R, r \mapsto r^*$.
\end{dfn}

Every commutative ring is a $*$-ring with $*$ being the identity operation. In the case of $\C$ one usually takes $*$ to be the complex conjugation. A more interesting example is provided by quaternions, where $*$ is the quaternion conjugation.

Note that if $R$ is a $*$-ring then so is $RG$, with
\[
\big( \sum_g r_g g \big)^* = \sum_g {r_g}^* g^{-1}
\]


\begin{thm}
\label{main thm}
Let $n$ be a positive integer, and let $G$ be a $\PD n R$ group. Suppose that additionally one of the following holds:
\begin{enumerate}
\item $R$ is a $*$-ring and $G$ is orientable;
\label{case 1}
\item $R$ is commutative.
\label{case 2}
\end{enumerate}
If $G$ is not amenable, then $G$ satisfies a linear $R$-homological isoperimetric inequality in dimension $n-1$.
\end{thm}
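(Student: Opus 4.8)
The plan is to deduce the theorem directly from \cref{amen} by producing a particularly convenient $(n-1)$-admissible resolution. The mechanism is Poincar\'e duality, which turns a projective resolution of the trivial module $R$ into another projective resolution of $R$ whose \emph{top} two terms are the duals of the \emph{bottom} two terms of the original; and since the bottom of a resolution may always be chosen to have the standard ``presentation'' shape coming from a finite generating set of $G$, the top differential of the dual resolution is forced to be a matrix of exactly the kind treated in \cref{amen}, after which non-amenability finishes the job. So I would first fix the input: since $G$ has type $\typeFP{}$ over $R$ and $\operatorname{cd}_R G = n$, there is a projective resolution
\[
0 \to C_n \to C_{n-1} \to \dots \to C_1 \to C_0 \to R \to 0
\]
of the trivial left $RG$-module $R$ with every $C_k$ finitely generated projective, $C_0 = RG$, $C_1 = RG^m$ for a finite generating set $\{s_1, \dots, s_m\}$ of $G$, and $C_1 \to C_0$ equal to the $m \times 1$ matrix $(s_i - 1)_i$: one starts a free resolution from the generating set in the usual way and uses $\typeFP{}$ together with $\operatorname{cd}_R G = n$ to see that the $n$-th syzygy is finitely generated projective. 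Observe that in both alternatives of the hypothesis $RG$ is a $*$-ring --- if $R$ is a $*$-ring then so is $RG$, and if $R$ is commutative we may take $* = \id$ on $R$.

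Next I would apply $\Hom_{RG}(-, RG)$. As $G$ is $\PD n R$, the resulting cochain complex of finitely generated projective \emph{right} $RG$-modules has cohomology $R$ concentrated in degree $n$, so after reindexing $k \mapsto n - k$ it is a finitely generated projective resolution --- by right $RG$-modules --- of the dualising module $D = H^n(G; RG)$. I would convert it to a resolution of left modules by twisting along the involution $r \mapsto r^{*}$ of $RG$; this operation is exact, preserves finite generation, freeness and projectivity, and is an isometry for $\vert\cdot\vert$, since $*$ on $RG$ preserves the cardinality of supports while transposition, the entrywise action of $*$, and the canonical identifications $\Hom_{RG}(RG,RG)\cong RG$ and $\Hom_{RG}(RG^m,RG)\cong RG^m$ all leave $\vert A \vert = \sum_{i,j}\vert a_{ij}\vert$ unchanged. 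If $G$ is orientable --- which is assumed in the first alternative --- then $D$ is already the trivial module $R$ and nothing more is needed. If $G$ is not orientable, then we are in the commutative case, $D$ is the rank-one $R$-module on which $G$ acts through a homomorphism $\chi\colon G \to R^{\times}$, and tensoring the whole resolution over $R$ (with the diagonal $G$-action) by the rank-one module with $G$-action $\chi^{-1}$ turns it into a resolution of the trivial module; on free modules this tensoring merely rescales the chosen basis vectors by units of $R$ and is therefore again an isometry for $\vert\cdot\vert$. Call the resulting resolution $C'_\bullet$.

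Then $C'_\bullet$ is a projective resolution of the trivial left $RG$-module $R$, concentrated in degrees $\leqslant n$, whose terms in degrees $n$ and $n-1$ are the (dualised, twisted, rescaled) images of $C_0 = RG$ and $C_1 = RG^m$, hence finitely generated free; equipping these with the transported bases makes $C'_\bullet$ an $(n-1)$-admissible resolution. Its top differential $\partial'_n\colon C'_n \to C'_{n-1}$ is the image under all of the above operations of the differential $C_1 \to C_0$, and unwinding the definitions it sends $b$ to $\big(b\,(r_i\,{s_i}^{-1} - 1)\big)_i$ for suitable $r_i \in R$ (with all $r_i = 1$ when $G$ is orientable); thus, up to an overall sign and up to which side the translations act on, $\partial'_n$ has exactly the shape treated in \cref{amen} for the finite generating set $\{{s_i}^{-1}\}$, and since $\vert \cdot \vert$ and the F\o{}lner argument of \cref{amen} are insensitive to these, that lemma applies. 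As $G$ is not amenable it yields $\kappa \geqslant 0$ with $\vert d\vert \leqslant \kappa\vert\gamma\vert$ for every $\gamma\in\im\partial'_n$ and every $d\in C'_n$ with $\partial'_n d = \gamma$. The $(n-1)$-boundaries of $C'_\bullet$ are precisely the elements of $\im\partial'_n$, each of which admits a preimage, so $C'_\bullet$ satisfies a linear isoperimetric inequality in dimension $n-1$, and hence so does $G$. (Alternatively one could prove the inequality for any single $(n-1)$-admissible resolution and invoke \cref{independence}, but that is unnecessary here.)

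The step I expect to be the main obstacle is the bookkeeping concealed in ``Poincar\'e duality turns the resolution into its dual'': one must spell out the chain-level duality isomorphism, confirm that it --- together with the right-to-left twist and, in the non-orientable commutative case, the twist by the orientation character $\chi$ --- is an isometry for $\vert\cdot\vert$, and keep track of left-versus-right conventions carefully enough to see that the ``presentation shape'' of $C_1 \to C_0$ is transmitted, up to signs and unit rescalings, to $\partial'_n$. Everything else is formal.
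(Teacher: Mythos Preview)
Your approach is essentially the paper's: dualise a length-$n$ projective resolution whose bottom two terms are $RG$ and $RG^m$ with the ``presentation'' differential, observe that (after twisting by $*$, and in the non-orientable commutative case by the orientation character) the result is an $(n-1)$-admissible resolution of the trivial module whose top differential is a column of the form $(1 - r_i s_i)_i$, and then apply \cref{amen}. The paper carries out exactly this computation, with the same two sub-cases; your remarks about the $*$-twist and the character-twist being isometries for $|\cdot|$ are correct and are implicit in the paper.

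There is, however, a genuine gap: the case $n=1$. Your construction of the input resolution requires simultaneously that $C_1 = RG^m$ with differential $(s_i-1)_i$ \emph{and} that the resolution terminate at degree $n$. For $n=1$ this forces $RG^m \to RG$ to be injective, i.e.\ $G$ free on $\{s_i\}$, which is not part of the hypothesis. The paper does not try to run the duality argument here; instead it invokes Dunwoody's theorem that groups of cohomological dimension~$1$ over $R$ are fundamental groups of finite graphs of finite groups, hence virtually free, and then observes that a virtually free $\PD 1 R$ group is virtually infinite cyclic, hence amenable---so the hypothesis ``$G$ is not amenable'' is never met when $n=1$ and the conclusion is vacuous. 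You need either this argument or some substitute; as written your proof only covers $n\geqslant 2$.
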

\begin{proof}
Let $G$ be a  $\PD n R$ group. Since $G$ is in particular of type $\typeFP{1}$ over $R$, it is generated by a finite set, say $\mathcal S = \{s_1,\dots,s_m\}$.
Duality guarantees that the cohomological dimension of $G$ over $R$ is $n$. 

If $n=1$ then  Dunwoody~\cite{Dunwoody1979} shows that $G$ is the fundamental group of a finite graph of finite groups, and hence the group $G$ is virtually free (by 
\cite{Serre2003}*{II.2.6 Corollary}). 
Dunwoody states his theorem for a commutative ring $R$, but his proof works also for non-commutative ones, as he remarks at the end of the paper. Let $H$ be a free subgroup of finite index in $G$ -- this inherits the $\PD{1}{R}$ property by \cite{JohnsonWall1972}*{Theorem 2}. Note that Johnson--Wall work with $R=\Z$, but their proof readily adapts to arbitrary $R$. Every free group which is a $\PD 1 R$ group is infinite cyclic, as can be seen by looking at the first cohomology with free module coefficients. We conclude that $G$ is virtually cyclic and hence amenable when $n=1$.

Now assume that $n>1$ and that $G$ is not amenable. Since the cohomological dimension of $G$ is $n$, there is a projective resolution
$$0\to P_n\to P_{n-1}\to\dots\to P_1\to P_0\to R\to 0$$ where
 $P_0=RG$, $P_1=RG^m$ (row vectors of length $m$), and the map $P_1\to P_0$ is given by right multiplication by the $m\times 1$-matrix (or column vector) $(1-s_1,\dots,1-s_m)^T$.


For a left $RG$-module $M$ we define $M'$ to be the left $RG$-module $\Hom_{RG}(M,RG)$ on which $x \in RG$ acts by the formula
\[x\theta (m)=\theta( x^*m )\]
(When $R$ is commutative, we take $*$ to be the identity on $R$, and so $(rg)^* = r g^{-1}$.)
In this way, the operation ${(-)}'$ defines a contravariant functor from left $RG$-modules to left $RG$-modules.
Since $G$ satisfies Poincar\'e duality over $R$, we obtain the exact sequence
\[\tag{$\dagger$} \label{dual resolution} 0\to P_0'\to P_{1}'\to\dots\to P_{n-1}'\to P_n'\to D\to 0\]
on applying the functor ${(-)}'$ to the resolution above, where $D = H^n(G;RG)$ (see \cite{JohnsonWall1972}, again adapting the argument to a general $R$).

Suppose that we are in case \eqref{case 1}. Since $G$ is oriented, we have $D = R$ as an $RG$-module. Hence, the resolution \eqref{dual resolution} is an
$(n-1)$-admissible resolution of the trivial $RG$-module $R$.
It also clearly satisfies a linear isoperimetric inequality in dimension $n-1$ by \cref{amen}, which proves the claim.

\smallskip
Now suppose that we are in the (more involved) case \eqref{case 2}. Since $D \cong R$ as an $R$-module, it is immediate that there exists a group homomorphism $\rho \colon G \to R$ such that the $G$-action on $D$ is given by $gr = \rho(g) r$. Let $E$ denote $R$ endowed with the $G$-action $gr = \rho(g^{-1}) r$. We now see that the diagonal action of $G$ on $D \otimes_R E$ is trivial, as
\[
g (r \otimes r') = gr \otimes gr' = \rho(g)r \otimes \rho(g^{-1})r' = \rho(g^{-1})\rho(g)r\otimes r' = r \otimes r'
\]
Hence $D \otimes E = R$, the trivial $RG$-module.

We tensor \eqref{dual resolution} with $E$ over $R$ (and tensor the differentials with $\id_E$) and obtain a chain complex $C_\bullet$ where the module structure on the chains is given by diagonal action. Since $E \cong R$ as an $R$-module, it is immediate that $C_\bullet$ is exact.

We have a classical isomorphism of $RG$-modules $R G \to R G \otimes_R E$ induced by $g \mapsto g \otimes \rho(g^{-1})$ and with inverse induced by $g \otimes 1 \mapsto g \rho(g)$. The existence of such an isomorphism implies that
 $C_k$ is a projective module for every $k$; it tells us also that $C_n = P_0' \otimes_R E$ and $C_{n-1} = P_1' \otimes_R E$ are free $RG$-modules. Thus, $C_\bullet$ is an $(n-1)$-admissible projective resolution of the trivial module $R$.
Direct computation shows that $\partial \colon C_n \to C_{n-1}$ is equal to the column vector
\[(1-\rho(s_1)s_1,\dots,1-\rho(s_m)s_m)^T\]
 and so $C_\bullet$ satisfies a linear isoperimetric inequality in dimension $n-1$ by \cref{amen}.
\end{proof}

\section{Dimension $2$}

The following result is at the heart of our arguments. Here, the coefficient ring $R$ can be any non-zero ring. In the case $R=\Z$ this result also follows from work of Gersten \cite{Gersten1996}*{Theorem 5.2}.

\begin{prop}
\label{linear ineq}
Let $G$ be a group of type $\typeFP{2}$ over $R$ which satisfies a linear $R$-homological isoperimetric inequality in dimension $1$. Then $G$ is Gromov hyperbolic.
\end{prop}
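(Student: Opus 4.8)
The plan is to translate the homological linear isoperimetric inequality in dimension $1$ into a genuine linear isoperimetric inequality for the presentation $2$-complex, and then invoke the standard characterisation of hyperbolicity via a linear isoperimetric function. Concretely, since $G$ is of type $\typeFP{2}$ over $R$ it is finitely presented (one should note here that $\typeFP{2}$ over a nonzero ring forces finite presentability, which one can see by the standard argument building a finite presentation from a partial free resolution), so fix a finite presentation $\langle S \mid \mathcal R \rangle$ and let $X$ be the Cayley $2$-complex. The cellular chain complex of $X$ with $R$-coefficients is then a $1$-admissible resolution, so by \cref{independence} the linear isoperimetric inequality in dimension $1$ holds for it: there is $\kappa$ such that every $1$-boundary $\gamma$ bounds a $2$-chain $d$ with $|d| \leqslant \kappa |\gamma|$.

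The key step is the passage from this homological (cycle/boundary, $R$-coefficient) statement to the combinatorial one (null-homotopic words, van Kampen diagrams). Given a null-homotopic word $w$ of length $\ell$ in the generators, its image under the boundary map is a $1$-cycle $\gamma_w \in C_1$ which is a boundary, with $|\gamma_w| \leqslant \ell$ (each letter contributes at most one edge-orbit to the support). Apply the inequality to get a $2$-chain $d$ with $|d| \leqslant \kappa \ell$ and $\partial d = \gamma_w$. The quantity $|d|$ counts, with the abelian-group norm $|\cdot|$, the number of $2$-cell-translates appearing in $d$ with nonzero coefficient — this is exactly an algebraic (homological) area bound of the form $\mathrm{Area}_{\mathrm{hom}}(w) \leqslant \kappa \ell$. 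The remaining task is to upgrade this to a bound on the ordinary (Dehn-function) area, i.e. to produce an actual van Kampen diagram for $w$ with at most a linear-in-$\ell$ number of $2$-cells. One way: a classical result (going back to Gersten, and used in exactly this context in \cite{Gersten1996}*{Theorem 5.2} for $R = \Z$) says that a linear homological isoperimetric inequality implies a linear Dehn function; alternatively one combines the homological area bound with the fact that, for any finitely presented group, the Dehn function and the homological Dehn function are "coarsely comparable" in the sense needed here — bounded homological area plus finite presentation yields a subquadratic, indeed linear, Dehn function. Finally, a group whose Dehn function is (sub)linear, equivalently satisfies a linear isoperimetric inequality, is Gromov hyperbolic by the standard theorem (Gromov; see e.g. the treatments in Bridson--Haefliger or Alonso et al.).

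I expect the main obstacle to be precisely this last upgrade — from a homological area bound over an arbitrary coefficient ring $R$ to a genuine combinatorial linear isoperimetric inequality. Over $\Z$ one has Gersten's result directly, but over a general nonzero ring $R$ the norm $|\cdot|$ is only the "support size" norm, not $\Z$-multiplicative, so one must be careful that a $2$-chain with small support genuinely corresponds to a van Kampen diagram (or at least a singular disc diagram / combinatorial filling) of comparably small area. The cleanest route is probably to work with the homological Dehn function $\mathrm{FArea}$ directly: show that $G$ of type $\typeFP{2}$ over any nonzero $R$ with a linear homological isoperimetric inequality already has a linear homological Dehn function, and then cite the (ring-independent) fact that a finitely presented group with a linear homological Dehn function is hyperbolic — this last step reduces to the $R = \Z$ case via the map $R \to R$ sending everything nonzero to the generator, or more robustly by noting that the linear homological filling inequality over $R$ forces the $1$-skeleton of $X$ (the Cayley graph) to be hyperbolic via a linear divergence / thin bigon type argument, after which hyperbolicity of $G$ follows since $G$ acts geometrically on its Cayley graph.
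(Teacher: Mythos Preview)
Your proposal has a genuine gap at the very first step: the claim that $\typeFP{2}$ over a nonzero ring forces finite presentability is \emph{false}. Bestvina and Brady constructed groups of type $\typeFP{\infty}$ over $\Z$ (hence over any $R$) that are not finitely presented, so there is no ``standard argument building a finite presentation from a partial free resolution''. This is not a technicality you can patch: the whole strategy of passing to a Dehn function and citing the combinatorial linear-isoperimetric criterion requires a finite presentation, and you simply do not have one. Your attempted workarounds in the last paragraph (the map ``sending everything nonzero to the generator'', or a thin-bigon argument) are not arguments as stated, and the first of them does not even make sense as a ring map.

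The paper's proof avoids this problem entirely by never invoking finite presentability. Instead of a simply connected Cayley $2$-complex it builds a cocompact free $G$-CW-complex $X$ that is only $1$-acyclic over $R$, obtained from a Cayley graph by attaching finitely many $G$-orbits of $2$-cells --- this is exactly what $\typeFP{2}$ over $R$ buys you, and no more. Then, rather than converting the homological inequality into a combinatorial one, it runs the Bridson--Haefliger thin-triangle argument \emph{directly with homological fillings}: a non-slim triangle is replaced by a hexagon (or quadrilateral) $H$ whose associated $1$-cycle $h$ must bound a $2$-chain $p$ with $|p|\leqslant\kappa|h|$, and one counts $2$-cells in the support of $p$ near the three long sides plus extra cells forced by the deep point $v$, obtaining a lower bound on $|p|$ that contradicts the linear upper bound for large $n$. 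The point is that the cell-counting part of the Bridson--Haefliger argument only ever uses that certain edges must lie in the boundary of \emph{some} $2$-cell of the filling, which is a statement about supports and therefore survives with chain coefficients in an arbitrary $R$ under the norm $|\cdot|$. This is the key insight your proposal is missing.
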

\begin{proof}
The proof is essentially identical to the Bridson--Haefliger proof of the corresponding fact for the usual, homotopic isoperimetric inequality -- see \cite{BridsonHaefliger1999}*{Chapter III.H Theorem 2.9}. We sketch it here for the convenience of the reader.

Let $X$ be a $2$-dimensional $G$-CW-complex on which $G$ acts freely and cocompactly and which is $1$-acyclic over $R$ (meaning that $H_1(X;R)=0$). Here we use the terminology \emph{$G$-CW-complex} as defined by L\"uck \cite{Lueck2002}*{Definition 1.25}: this is same notion as \emph{admissible $G$-complex} in the sense of Brown 
\cite{Brown1982}*{Chapter IX \S10}. To construct such an $X$ choose a presentation $2$-complex of $G$ based on bouquet of finitely many circles and let $X$ be a subcomplex of its universal cover that includes the $1$-skeleton and sufficiently but finitely many orbits of the $2$-cells to ensure that $H_1(X;R)=0$. It is here that we require $G$ to be of type $\typeFP{2}$ over $R$.
Note that the $1$-skeleton of $X$ is a Cayley graph for $G$ and the conclusion that $G$ is Gromov hyperbolic will follow from a proof that this $1$-skeleton with its combinatorial metric is Gromov hyperbolic.

We continue by fixing a constant $N$ to be the maximal number of edges contained in the image of the attaching map of any $2$-cell in $X$. Since the action of $G$ on $X$ is cocompact, the number $N$ is a well-defined integer. 

The cellular chain complex of $X$ can easily be extended to a $1$-admissible projective resolution of $R$, which in turn must satisfy a linear isoperimetric inequality in dimension $1$ by \cref{independence}.
The isoperimetric inequality gives us a constant $\kappa$ such that every $1$-cycle in $X$ supported on $l$ edges is the boundary of a $2$-chain supported on at most $\kappa l$ faces. For convenience, we take $\kappa$ to be an integer.
We set $k =  \kappa N^2 + 1$ and $m =  \kappa N$.

We aim at showing that for some $n$, every geodesic triangle in $X$ is $(n+1)$-slim (for this notion, as well as other background information on Gromov hyperbolic spaces, we refer the reader to \cite{BridsonHaefliger1999}).
We argue by contradiction, and let $\Delta$ be a geodesic triangle in $X$ which is not $(n+1)$-slim, which amounts to saying that there exists a vertex $v$ on one of the sides of $\Delta$ which does not lie in the $n$-neighbourhood of the other two sides. We may assume that $n > 6k$.

The first part of the proof of \cite{BridsonHaefliger1999}*{Chapter III.H Theorem 2.9} shows that by either `cutting the corners' or `cutting a corner and the opposite edge' of $\Delta$ we arrive at one of the following situations (see \cref{figure}).

\begin{figure}[h]
 \includegraphics[scale=0.35]{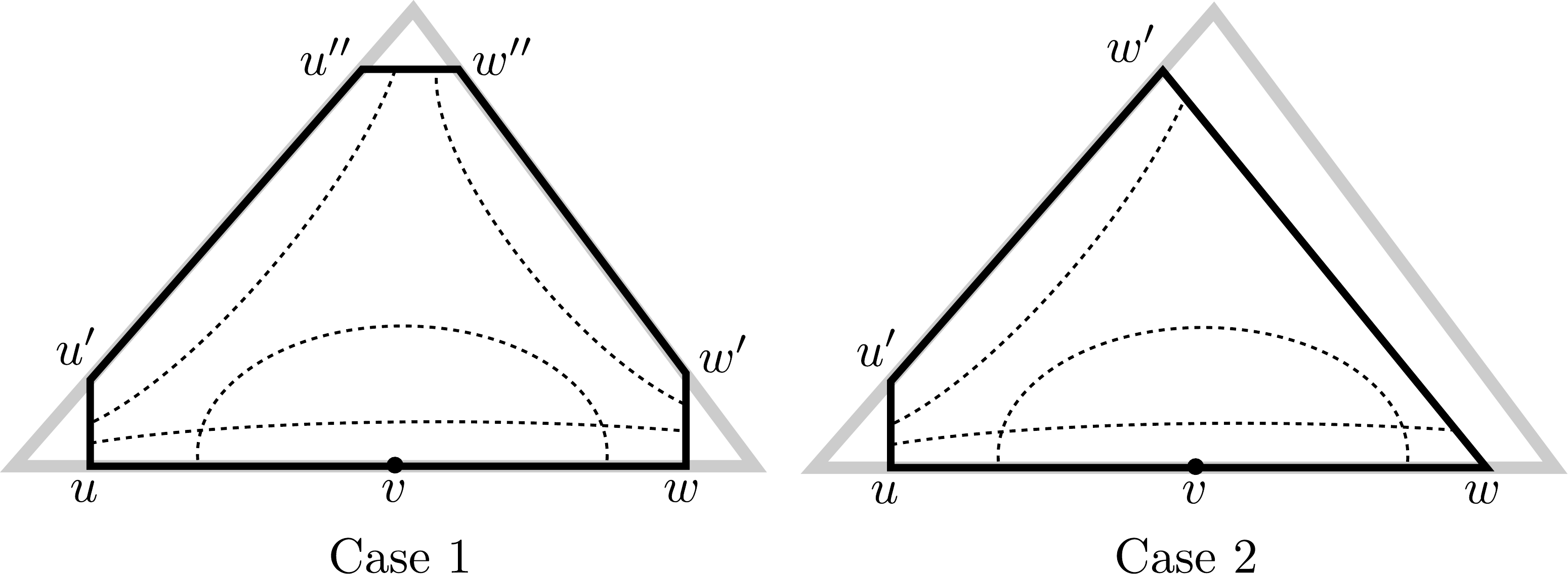}
\caption{The triangle $\Delta$ and the polygon $H$}
\label{figure}
 \end{figure}

\smallskip
\noindent \textbf{Case 1:}
There exists a geodesic hexagon $H$ in $X$ with vertices (written cyclically) $u'',u',u,w,w',w''$, such that (denoting the combinatorial distance in the $1$-skeleton of $X$ by $d$):
\begin{itemize}
 \item $d(u,u') = d(w,w') = d(u'',w'') = 2k$, and
 \item the $(k-1)$-neighbourhoods of the segments $[u,w], [u',u'']$, and $[w',w'']$ are pairwise disjoint, and
 \item there exists a point $v \in [u,w]$ such that the ball around $v$ of radius $n-k+1$ is disjoint from the $(k-1)$-neighbourhoods of $[w',w'']$ and $[u'',u']$.
\end{itemize}
We let $\alpha, \beta$, and $\gamma$ denote the lengths of the segments $[u,w], [u',u'']$, and $[w',w'']$, respectively. 

Orienting the edges of $H$ in a coherent manner, and putting the weight $1$ on every edge we obtain an $R$-cycle $h$ (note that the ring $R$ plays no role here). Since $X$ is $1$-acyclic, the cycle $h$ is a boundary. Consider a $2$-chain $p$ with $\partial p = h$ which minimises the value $\vert p \vert$, and let $P$ denote the support of $p$.

Set $h_{[u,w]}$ to be the $1$-chain obtained from putting the weight $1$ only on edges in $[u,w]$.
Let $D_0$ denote the set of all the $2$-cells in $P$ whose boundary contains an edge from the segment $[u,w]$. Since every $2$-cell has at most $N$ faces, and since every edge in the segment $[u,w]$ has to appear in the boundary of some cell in $P$, we conclude that $\vert D_0 \vert \geqslant \alpha/N$.
Let $p_0$ denote the $2$-chain obtained from $p$ by setting the coefficient of $2$-cells not lying in $D_0$ to $0$. Observe that the $1$-chain
\[
 \partial p_0 - h_{[u,w]}
\]
contains in its support an edge path connecting $u$ to $w$, which does not contain any edges of the segment $[u,w]$.

We will now recursively define pairwise-disjoint subsets $D_1, \dots, D_{m-1}$ of $P$ and $2$-chains $p_1, \dots, p_{m-1}$ as follows: suppose that for some $i$ the $1$-chain
\[
 q_i = \partial (\sum_{j=0}^i p_j ) - h_{[u,v]}
\]
contains an edge path connecting $u$ to $w$ (we know this to hold for $i=0$). We set $D_{i+1}$ to be subset of $P \s- \bigcup_{j=0}^i D_i$ containing all $2$-cells whose boundaries contain at least one edge of $\supp q_i$. We set $p_{i+1}$ to be $2$-chain obtained from $p$ by restricting the support to $D_{i+1}$.

One key observation is that the faces of $D_i$ lie entirely in the $N(i+1)$-neighbourhood of the segment $[u,w]$, and therefore the $2$-cells in $D_{[u,w]} = \bigcup_{i=0}^{m-1} D_i$ lie in the $Nm$-neighbourhood. But $Nm =  \kappa N^2 = k - 1$, and so the union lies in the $(k-1)$-neighbourhood.

Another is that
the boundaries of faces in $D_i$ have to include every edge in the support of $q_i$, with the possible exception of the edges lying on the segments $[u,u']$ and $[w,w']$. But there are at most $2k-2$ such edges, and so
$\vert D_i \vert \geqslant \frac {\alpha - 2k +2}N$, and thus
\[
 \left\vert D_{[u,w]} \right\vert \geqslant \frac {m (\alpha-2k+2)} N = \kappa (\alpha-2k+2)
\]

We repeat the argument for the other segments, and obtain a subset $D_{[w',w'']} \subseteq P$ of cardinality at least $\frac {m (\beta -2k -2)} N$ whose elements lie in the $(k-1)$-neighbourhood of the segment $[u',u'']$, and a subset $D_{[u'',u']} \subseteq P$ of cardinality at least $\frac {m (\gamma-2k-2)} N$ whose elements lie in the $(k-1)$-neighbourhood of the segment $[w',w'']$. Since the three $(k-1)$-neighbourhoods are disjoint, we conclude that
\[
 \vert P \vert \geqslant \kappa (\alpha + \beta + \gamma -6k + 6)
\]

We are now going to find further $2$-cells in $P$ not contained in the union \[D_{[u,w]} \cup D_{[w',w'']} \cup D_{[u'',u']}\]
Note that every edge in $\supp q_{m-1}$ is of distance at most $k-1$ from the segment $[u,w]$. Consider a function taking every edge in $\supp q_{m-1}$ to its closest vertex on $[u,w]$ (if there is more than one closest vertex, we choose one). Now the support of $q_{m-1}$ contains a path $f$ connecting $u$ to $w$, and two adjacent edges in $f$ are sent to vertices at most $2k$ apart (since $[u,w]$ is a geodesic segment), and therefore the $2k$-neighbourhood of $v$ contains some edges in $\supp f$. This implies that the $n-2k$-neighbourhood of $v$ contains at least $2(n-4k)$ edges of $\supp f$. But each such edge must lie in the boundary of some $2$-cell in $P \s- (D_{[u,w]} \cup D_{[w',w'']} \cup D_{[u'',u']})$, and we conclude that
\[
 \vert P \vert \geqslant \kappa (\alpha + \beta + \gamma -6k + 6) + \frac {2n - 8k}N
\]
Now we observe that $\vert h \vert = \alpha + \beta + \gamma + 6k$, and the linear isoperimetric inequality yields
\[
 \kappa(\alpha + \beta + \gamma + 6k) \geqslant \vert P \vert \geqslant \kappa (\alpha + \beta + \gamma -6k + 6) + \frac {2n - 8k}N
\]
which is impossible for large $n$, as $k$ and $N$ do not depend on $n$.

\smallskip
\noindent \textbf{Case 2:}
There exists a geodesic quadrilateral $H$ in $X$ with vertices (written cyclically) $u',u,w,w'$, such that
\begin{itemize}
 \item $d(u,u') =  2k, d(w,w') = 4k$, and
 \item the $(k-1)$-neighbourhoods of the segments $[u,w]$ and $[u',w]$ are disjoint, and
 \item there exists a point $v \in [u,w]$ such that the ball around $v$ of radius $n-2k$ is disjoint from the $(k-1)$-neighbourhood of $[w',u']$.
\end{itemize}
We let $\alpha$ and $\beta$ denote the lengths of the segments $[u,w]$, and $[u',w']$, respectively.

We argue in a completely analogous manner, and establish first that
\[
  \vert P \vert \geqslant \kappa (\alpha + \beta  -4k + 4)
\]
and then that
\[
 \vert P \vert \geqslant \kappa (\alpha + \beta  -4k + 4) + \frac {n - 4k}N
\]
We then observe that $\vert h \vert = \alpha + \beta + 6k$, and the isoperimetric inequality leads to a bound on $n$ as before.
\end{proof}

Combining Theorem 3.3 in the case $n=2$ and Proposition 4.1 yields the immediate consquence that if $G$ is a $\PD{2}{R}$-group (where, as usual, $R$ is a $*$-ring and either $R$ is commutative or $G$ is orientable) then either $G$ is amenable or $G$ is hyperbolic. 

Now, subgroups of hyperbolic groups are either virtually cyclic or contain non-abelian free subgroups (see \cite{GdlH90}*{Chapter 8, Theorem 37} for a careful explanation of this fact) and so the above observation implies a Tits alternative: namely that every subgroup of a $\PD 2 R$ group is amenable or contains a non-abelian free subgroup. Moreover these two possibilities are clearly mutually exclusive, making an attractive contrast of the case of $\PD{1}{R}$-groups which are always virtually infinite cyclic and in particular both amenable and (elementary) hyperbolic.
We summarize these deductions:

\begin{cor}[Tits alternative]
\label{tits}
Let $G$ be a $\PD{2}{R}$-group as above. Then exactly one of the following holds:
\begin{enumerate}
\item $G$ is amenable.
\item $G$ is a non-elementary hyperbolic group.
\end{enumerate}
\end{cor}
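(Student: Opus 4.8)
The plan is to deduce \cref{tits} as a quick formal consequence of \cref{main thm} and \cref{linear ineq}, after which only a small amount of bookkeeping about the word ``exactly'' remains. Since $G$ is a $\PD 2 R$ group it is of type $\typeFP{}$ over $R$, hence in particular of type $\typeFP 2$ over $R$, and the standing assumption on the pair $(R,G)$ --- that $R$ is a $*$-ring with $G$ orientable, or that $R$ is commutative --- is precisely the hypothesis of \cref{main thm} in the case $n = 2$. So if $G$ is not amenable, \cref{main thm} produces a linear $R$-homological isoperimetric inequality in dimension $1$, and then \cref{linear ineq} shows that $G$ is Gromov hyperbolic. This already yields the coarse dichotomy that $G$ is amenable or $G$ is hyperbolic.

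To sharpen this into the stated alternative I would argue as follows. If $G$ is not amenable, then by the above $G$ is hyperbolic; since every elementary (equivalently, virtually cyclic) hyperbolic group is amenable, $G$ must in fact be non-elementary hyperbolic, so alternative (2) holds. If instead $G$ is amenable, then (1) holds, whereas (2) cannot, because a non-elementary hyperbolic group contains a non-abelian free subgroup (see \cite{GdlH90}*{Chapter 8, Theorem 37}) and is therefore non-amenable. Hence at least one of (1), (2) holds, and the two are mutually exclusive, which is exactly the assertion.

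The argument carries no serious obstacle; the only points that need care are bookkeeping ones: checking that the standing hypotheses on $R$ and $G$ really do match those of the two results being invoked, and noting that a $\PD 2 R$ group genuinely cannot be elementary hyperbolic. The latter is because its cohomological dimension over $R$ equals $2$, so it is neither finite nor virtually cyclic --- which is also the reason why the analogous statement for $\PD 1 R$ groups fails to be an ``exactly one'' dichotomy, those groups being simultaneously amenable and (elementary) hyperbolic.
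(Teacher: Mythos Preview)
Your proposal is correct and follows essentially the same route as the paper: combine \cref{main thm} with $n=2$ and \cref{linear ineq} to obtain ``amenable or hyperbolic'', then use the free-subgroup dichotomy for hyperbolic groups (the paper cites the same reference \cite{GdlH90}*{Chapter 8, Theorem 37}) to upgrade to ``amenable or non-elementary hyperbolic'' and observe mutual exclusivity. Your closing remark that a $\PD 2 R$ group cannot be elementary hyperbolic is not actually needed for the ``exactly one'' clause---you already established that in the preceding paragraph---but it matches the paper's own aside contrasting the $\PD 1 R$ case.
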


\begin{rmk}
 \cref{tits} becomes false if one relaxes the assumption on the group from being a Poincar\'e duality group to being only a duality group with formal dimension $2$: for example free groups of finite rank $\geqslant 1$ are $1$-dimensional duality groups and so if $E$ and $F$ are non-trivial free groups of finite positive rank at least one of which has rank $\geqslant 2$ then $E\times F$ is a $2$-dimensional duality group (by an application of \cite{Bieri1981}*{Theorem 9.10}) and it is neither amenable nor hyperbolic.
 \smallskip
 
It is an interesting problem if one could find other examples of duality groups of formal dimension $2$, which are neither amenable nor hyperbolic. The realm of arithmetic groups seemed initially promising, due to the work of Borel--Serre  \cite{BorelSerre1970}. Unfortunately, the only examples the authors were able to find there were arithmetic lattices in $\mathrm{SO}(2,2)$ of $\Q$-rank $2$. Since $\mathrm{SO}(2,2)$ is isogenous to $\PSL_2(\R) \times \PSL_2(\R)$, and since irreducible lattices of $\Q$-rank at least two contain finite index subgroups of $\SL(3,\Z)$ or $\mathrm{SO}(2,3)_{\Z}$ (see \cite{WitteMorris2015}*{Theorem 9.2.7}), it is immediate that the lattices we found are reducible and arithmetic, and hence commensurable to $\PSL_2(\Z) \times \PSL_2(\Z)$ (\cite{WitteMorris2015}*{Propositions 4.3.3 and 6.1.5}), which is itself commensurable to a product of free groups.
\end{rmk}

\begin{rmk}
\cref{tits} also fails for $\PD{n}{R}$-groups  for any $n\ne2$. For $n\geqslant 3$ there always exist $\PD{n}{R}$-groups that are both non-amenable and non-hyperbolic essentially because such groups can admit embedded incompressible tori.
\end{rmk}

\medskip With the next two propositions we examine the amenable and hyperbolic cases in turn. We start with the amenable case, where we assume $R= \Z$.
The following is an adaption of the work of Degrijse~\cite{Degrijse2016}.

\begin{prop}[Amenable case]
\label{amen case}
Suppose that $G$ is an amenable $\PD 2 \Z$-group. Then $G$ is isomorphic to $\Z \rtimes \Z$.
\end{prop}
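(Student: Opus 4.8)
The plan is to produce an epimorphism $G\twoheadrightarrow\Z$ and then to show that its kernel is infinite cyclic.

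First I would note that a $\PD{2}{\Z}$ group is torsion-free, of cohomological dimension $2$ over $\Z$, and of type $\typeFP{}$, so it has a well-defined integral Euler characteristic $\chi(G)$. The $L^2$-Euler characteristic of a type $\typeFP{}$ group agrees with its ordinary Euler characteristic, and all $L^2$-Betti numbers of an infinite amenable group vanish (Cheeger--Gromov); hence $\chi(G)=0$. Since $G$ is also a Poincar\'e duality group over $\Q$, we have $\chi(G)=b_0(G)-b_1(G)+b_2(G)=1-b_1(G)+b_2(G)$, where $b_i(G)=\dim_\Q H_i(G;\Q)$ and $b_2(G)\in\{0,1\}$ (equal to $1$ precisely when $G$ is orientable over $\Q$). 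Therefore $b_1(G)=1+b_2(G)\geqslant 1$, so there is an epimorphism $\phi\colon G\twoheadrightarrow\Z$. Put $K=\ker\phi$; since $\Z$ is free, the extension $1\to K\to G\to\Z\to 1$ splits and $G=K\rtimes\Z$.

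Next I would identify $K$. It is non-trivial, for otherwise $G\cong\Z$, contradicting $\operatorname{cd}_\Z G=2$; and it has infinite index in $G$. By Strebel's theorem on subgroups of infinite index in Poincar\'e duality groups we then get $\operatorname{cd}_\Z K\leqslant 1$, hence $\operatorname{cd}_\Z K=1$. The Stallings--Swan theorem now shows that $K$ is free; being a subgroup of the amenable group $G$, it is amenable, and an amenable free group is trivial or infinite cyclic, so $K\cong\Z$. Consequently $G\cong\Z\rtimes\Z$, as claimed.

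I expect the bound $\operatorname{cd}_\Z K\leqslant 1$ to be the real content: it is the only place where Poincar\'e duality is used in an essential way beyond the Euler characteristic count. It is exactly Strebel's theorem, and in the case at hand --- a normal subgroup with infinite cyclic quotient --- it reduces to a short computation with the Lyndon--Hochschild--Serre spectral sequence of $1\to K\to G\to\Z\to 1$ with coefficients in $\Z G$, which shows that $\operatorname{cd}_\Z K=2$ would make $H^3(G;\Z G)$ non-zero, against duality. The other ingredients --- vanishing of the Euler characteristic for infinite amenable groups, splitting of $\Z$-extensions, and freeness of groups of cohomological dimension one --- are routine.
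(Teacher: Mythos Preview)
Your argument is correct and follows essentially the same route as the paper's: vanishing of the (equal to $L^2$) Euler characteristic for infinite amenable groups forces $b_1(G)\geqslant 1$, hence an epimorphism $G\twoheadrightarrow\Z$ whose kernel has cohomological dimension at most $1$ by Strebel, is therefore free by Stallings--Swan, and must be $\Z$ by amenability. The only cosmetic differences are your explicit mention of the splitting of the $\Z$-extension and your sketch of how Strebel's bound could be recovered via the Lyndon--Hochschild--Serre spectral sequence in this special case.
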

\begin{proof}
As explained by Eckmann~\cite{Eckmann1996}*{Section 4.1}, the usual complex Euler characteristic $\chi(G)$ of $G$ coincides with its $L^2$-Euler characteristic.
Note that $G$ is non-trivial, since the trivial group is not a $\PD 2 \Z$ group. We also know that $G$ is torsion-free, and hence it is infinite.
The $L^2$-Euler characteristic of infinite amenable groups vanishes by \cite{Lueck2002}*{Theorem 7.2(1)} of L\"uck.

We have $H_0(G;\C) = \C$. 
But, since $G$ is of type $\typeFP{2}$  over $\Z$, we have
\[
 0 = \chi(G) = \dim_\C  H_0(G;\C) + \dim_\C H_2(G;\C) - \dim_\C H_1(G;\C)
\]
and so $H_1(G;\C)$ has rank at least $1$. Since $G$ is finitely generated, this implies the existence of a non-trivial homomorphism $G \to \Z$; let $K$ denote  its kernel. Strebel's result~\cite{Strebel1977} tells us that $K$ is of $\Z$-cohomological dimension at most $1$, and so it is free by a result of Stallings~\cite{Stallings1968}. It is also amenable, and therefore is either trivial or isomorphic to $\Z$. In the former case we have $G = \Z$, which is not a $\PD 2 \Z$ group. In the latter case we have proven the claim.
\end{proof}

For the hyperbolic case we need two preparatory lemmas.

\begin{lem}\label{partA}
Let $G$ be a group with a subgroup $H$ of finite index and a central subgroup $Z$ such that the following hold:
\begin{enumerate}
\item $Z\subseteq H$.
\item $G/Z$ is an orientable $\PD{2}{\Z}$-group. 
\item $Z$ has finite exponent dividing the index $|G:H|$.
\end{enumerate}
Then the extension $Z\rightarrowtail H\twoheadrightarrow H/Z$ is split and $H$ is isomorphic to $Z\times H/Z$. In particular, if $Z$ is finite then $G$ is virtually torsion-free.
\end{lem}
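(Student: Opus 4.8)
The plan is to analyze the extension class of $Z \rightarrowtail H \twoheadrightarrow H/Z$ in $H^2(H/Z; Z)$, where $Z$ is central, and show it vanishes. First I would observe that since $Z$ is central in $G$, and hence in $H$, and since $H/Z$ has finite index in $G/Z$, the quotient $H/Z$ is again a $\PD{2}{\Z}$-group by \cite{JohnsonWall1972}*{Theorem 2}; moreover it is orientable since it has finite index in the orientable group $G/Z$ (passing to a finite-index subgroup does not break orientability). The central extension $Z \rightarrowtail H \twoheadrightarrow H/Z$ is classified by a class $e \in H^2(H/Z; Z)$, where the action of $H/Z$ on $Z$ is trivial. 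The goal is to show $e = 0$, which forces the extension to split and, because $Z$ is central, yields $H \cong Z \times H/Z$.

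The key step is to use Poincar\'e duality for $H/Z$ together with the transfer (corestriction-restriction) argument built from the finite index $|G:H|$. Write $Q = H/Z$ and $\bar G = G/Z$, so $Q \leqslant \bar G$ with $[\bar G : Q] = [G:H] =: \ell$. Since $Z$ has exponent dividing $\ell$, multiplication by $\ell$ annihilates $Z$, and hence annihilates $H^2(Q; Z)$. On the other hand, Poincar\'e duality gives $H^2(Q; Z) \cong H_0(Q; Z) \cong Z$ (using orientability, so the dualizing module is trivial), and this isomorphism is induced by cap product with the fundamental class. The point is that the extension class $e \in H^2(Q;Z) \cong Z$ is an element of a group on which multiplication by $\ell$ is zero, so it suffices to show that $e$ is divisible by $\ell$, or more precisely that $e$ lies in the image of a transfer map that introduces a factor of $\ell$. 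Concretely, one considers the restriction of $e$ to the finite-index subgroup — but here the cleaner route is: the class $e$ is the image under the connecting/inclusion-induced map of the extension class of $Z \rightarrowtail G \twoheadrightarrow \bar G$ restricted along $Q \hookrightarrow \bar G$, and the composite $\mathrm{res}^{\bar G}_Q$ followed by $\mathrm{cor}^{\bar G}_Q$ is multiplication by $\ell$ on $H^2(\bar G; Z) \cong Z$. Chasing this, $\ell \cdot (\text{class of } G\text{-extension}) = \mathrm{cor}(e)$; but $\ell$ kills everything since $\ell Z = 0$, so the $G$-extension class is $0$ and in particular its restriction $e$ to $Q$ is $0$. (Alternatively and more directly: $e$ itself is killed by $\ell$ since $\ell Z = 0$, and $e = \mathrm{res}^{\bar G}_Q(\bar e)$ where $\bar e \in H^2(\bar G; Z)$; applying $\mathrm{cor}^{\bar G}_Q$ gives $\ell \bar e = \mathrm{cor}(e)$, and then $\mathrm{res}(\mathrm{cor}(e)) = \sum_{g} g \cdot e = \ell e = 0$ by the double coset formula with trivial action — hence $e = \mathrm{res}(\bar e)$ with $\ell \bar e = \mathrm{cor}(e)$, forcing the analysis to collapse.) Let me streamline: the honest mechanism is that $H^2(Q;Z)$ is annihilated by $|Z|$'s exponent $= $ a divisor of $\ell$, and the transfer shows $\ell \cdot H^2(Q;Z) = 0$ is compatible, so $e=0$ after identifying it as living in $Z \cong H^2(Q;Z)$ which the hypothesis kills; I would present this carefully with the double coset formula.

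Having shown $e = 0$, the extension $Z \rightarrowtail H \twoheadrightarrow H/Z$ splits, and since $Z$ is central the splitting produces an internal direct product $H \cong Z \times H/Z$, proving the main assertion. For the final sentence: if in addition $Z$ is finite, then $H/Z$ is a finitely generated (indeed $\PD{2}{\Z}$) group, which is torsion-free — it has finite cohomological dimension over $\Z$, being of type $\typeFP{}$ with $\mathrm{cd} = 2$, so it contains no nontrivial finite subgroup. Thus $H \cong Z \times H/Z$ has a torsion-free subgroup of finite index (namely $H/Z$, or rather a copy thereof inside $H$), so $G$, which contains $H$ with finite index, is virtually torsion-free.

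I expect the main obstacle to be bookkeeping with the coefficient action and the transfer: one must verify that $H/Z$ really is an \emph{orientable} $\PD{2}{\Z}$-group (so that the dualizing module is genuinely trivial and $H^2(H/Z;Z)\cong Z$ for the \emph{trivially}-acted-upon $Z$), and that the double-coset/transfer formula is being applied with the correct (trivial) module structure on $Z$ throughout. A secondary subtlety is ensuring the identification of the extension class in $H^2$ with an element of $Z$ is the one on which the hypothesis "$Z$ has exponent dividing $\ell$" bites — i.e. that the isomorphism $H^2(H/Z;Z)\cong Z$ is $Z$-module-linear, which it is since cap product with the fundamental class is natural in the coefficients. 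None of these is deep, but getting the arrows to point the right way is where care is needed.
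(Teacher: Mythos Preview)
Your overall structure is right—reduce to showing that the restricted extension class $e=\mathrm{res}^{\bar G}_{Q}(\bar e)\in H^2(Q;Z)$ vanishes—but the transfer gymnastics in the middle do not close the argument. From $\mathrm{cor}\circ\mathrm{res}=\ell\cdot\mathrm{id}$ on $H^2(\bar G;Z)$ you only get $\mathrm{cor}(e)=\ell\bar e=0$, which says nothing about $e$ (corestriction need not be injective). Your double-coset alternative does not yield $\mathrm{res}(\mathrm{cor}(e))=\ell e$ without extra hypotheses such as normality of $Q$ in $\bar G$. And the ``streamlined'' version merely records that $\ell\cdot H^2(Q;Z)=0$, i.e.\ $\ell e=0$, which is not $e=0$; the isomorphism $H^2(Q;Z)\cong Z$ is perfectly compatible with $Z$ having exponent $\ell$ and $e$ being a nonzero element of it.

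The missing idea, and the one the paper uses, is the compatibility of Poincar\'e duality with the two kinds of wrong-way maps: the \emph{restriction} $H^2(\bar G;Z)\to H^2(Q;Z)$ corresponds under the PD isomorphisms to the \emph{homological transfer} $H_0(\bar G;Z)\to H_0(Q;Z)$ (not to the inclusion-induced map). With trivial coefficients this degree-$0$ transfer is literally multiplication by $\ell=|\bar G:Q|$ on $Z$, hence zero by hypothesis; the commuting square then forces $\mathrm{res}$ to be zero and so $e=\mathrm{res}(\bar e)=0$. Once you insert this PD--transfer compatibility, everything else you wrote (the splitting, the direct product, the virtually-torsion-free conclusion) goes through.
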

\begin{proof}
Write $\overline G=G/Z$ and $\overline H=H/Z$.
Initially we need only  that $\overline H$ has finite index in $\overline G$ and that $\overline G$ is an orientable $\PD{n}{\Z}$-group for some $n$.
 Since $\overline H$ has finite index in $\overline G$ there are both restriction and corestriction maps in both cohomology and homology: for any $\Z\overline G$-module $M$ we have natural commutative squares
\[
\xymatrix{
H^i(\overline G,M)\ar[r]^\simeq\ar[d]^{\mathrm{Res}}
&H_{n-i}(\overline G,M)\ar[d]^{\mathrm{Tr}}
&H^i(\overline G,M)\ar[r]^\simeq
&H_{n-i}(\overline G,M)\\
H^i(\overline H,M)\ar[r]^\simeq
&H_{n-i}(\overline H,M)
&H^i(\overline H,M)\ar[r]^\simeq\ar[u]_{\mathrm{Tr}}
&H_{n-i}(\overline H,M).\ar[u]_{\mathrm{Cor}}\\
}
\]
Here the horizontal maps are the Poincar\'e duality isomorphisms.
We refer to the corestriction in cohomology and restriction in homology as transfer maps. When the module $M$ is acted on trivially by $G$ then these transfer maps are both given by multiplication by the index $|G:H|$ in dimension $0$. Therefore, when $n=2$, $i=0$, and $M=Z$ the left hand square reduces to
\[
\xymatrix{
H^2(\overline G,Z)\ar[r]^\simeq\ar[d]^{\mathrm{Res}}
&H_{0}(\overline G,Z)\ar[d]^{\times|G:H|}\\
H^2(\overline H,Z)\ar[r]^\simeq
&H_{0}(\overline H,Z).\\
}
\]
Since $H$ has index equal to a multiple of the exponent of $Z$, the transfer map here vanishes and so the extension class in $H^2(\overline G,Z)$ representing $Z\rightarrowtail G\twoheadrightarrow\overline G$ vanishes on restriction to $\overline H$ and the desired splitting follows.
\end{proof}

We will follow Gabai~\cite{Gabai1992} and use the terminology \emph{Fuchsian group} to refer to any discrete subgroup of $\PGL_2(\R)$. Note that some authors prefer to consider only discrete subgroups of $\PSL_2(\R)$, which we will call \emph{orientable Fuchsian groups}. Observe that every Fuchsian group has a canonical orientable Fuchsian subgroup of index at most $2$ obtained by intersecting the group with $\PSL_2(\R)$. It is well known that every finitely generated orientable Fuchsian group $G$ is either virtually free (which includes the finite case), or virtually an \emph{orientable closed Fuchsian group}, that is, the fundamental group of a closed orientable surface of genus at least $2$ (see, e.g.,  \cite{Hoareetal1972}*{Page 60}). Moreover, if $G$ is torsion free, then $G$ is itself either free or an orientable closed Fuchsian group (this follows from \cite{BurnsSolitar1983}*{Lemma 2}). A Fuchsian group whose canonical orientable Fuchsian subgroup is a closed orientable Fuchsian group will be called a \emph{closed Fuchsian group}.

The group $\PGL_2(\R)$ acts on the Riemann sphere $\C \cup \{\infty\}$ via M\"obius transformations, and the action preserves the circle $\R \cup \{\infty\}$. This way we obtain a canonical action of every Fuchsian group on the circle. Moreover, this action is orientation preserving if and only if the Fuchsian group is orientable.

\begin{lem}\label{partB}
Let $G$ be a finitely generated group with a finite normal subgroup $K$ such that $G/K$ is Fuchsian. Then $G$ is virtually Fuchsian.
\end{lem}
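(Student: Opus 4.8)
The plan is to reduce to a central extension of a Fuchsian group by a finite abelian group, kill the extension class on a suitable finite-index subgroup, and then read off the Fuchsian structure from the resulting direct product.

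\textbf{Step 1 (making $K$ central).} Conjugation gives a homomorphism $G\to\Aut(K)$, and since $K$ is finite, $\Out(K)$ is finite; hence the preimage $G_1\le G$ of $\ker\bigl(G/K\to\Out(K)\bigr)$ has finite index. For every $g\in G_1$ the automorphism $c_g$ of $K$ is inner, so $g\in K\cdot C$ where $C=C_{G_1}(K)$; thus $G_1=KC$. Then $C$ has finite index in $G$, the group $A:=C\cap K=Z(K)$ is finite abelian and central in $C$, and $C/A\cong G_1/K$ is a finite-index subgroup of the Fuchsian group $G/K$, hence itself Fuchsian. Replacing $G$ by $C$ we may assume $K=A$ is finite abelian and central, so that we have a central extension $1\to A\to G\to Q\to1$ with projection $p\colon G\to Q$, where $Q$ is a finitely generated Fuchsian group. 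Write $n=\exp(A)$ and let $\alpha\in H^2(Q;A)$ (trivial action) be the class of this extension.

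\textbf{Step 2 (splitting on a finite-index subgroup).} Using the structure theory of finitely generated Fuchsian groups recalled above, choose a finite-index subgroup $Q_0\le Q$ which is either a free group of finite rank or a closed orientable surface group of genus at least $2$. If $Q_0$ is free then $H^2(Q_0;A)=0$, so $\alpha$ already restricts to $0$ on $Q_0$; put $Q'=Q_0$. If $Q_0$ is a surface group, choose a further finite-index subgroup $Q'\le Q_0$ of index divisible by $n$ (surface groups of genus $\ge2$ have subgroups of every finite index). Since $Q_0$ is an orientable $\PD{2}{\Z}$-group, the argument of \cref{partA} identifies the restriction $H^2(Q_0;A)\to H^2(Q';A)$ with the transfer $H_0(Q_0;A)\to H_0(Q';A)$, which is multiplication by $[Q_0:Q']$ and hence zero on $H_0(Q_0;A)=A$. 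In either case $\alpha$ restricts to $0$ on $Q'$, so the central extension $1\to A\to p^{-1}(Q')\to Q'\to1$ splits and $p^{-1}(Q')\cong A\times Q'$.

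\textbf{Step 3 (conclusion).} In both cases $Q'$ is itself a Fuchsian group, so $A\times Q'$ contains the Fuchsian group $Q'$ with finite index $|A|$; hence $p^{-1}(Q')$ is virtually Fuchsian, and since $p^{-1}(Q')$ has finite index in $G$, so is $G$.

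The one genuine point is Step 2: the restriction of the extension class must be made to vanish, which is why one first passes to a torsion-free orientable finite-index subgroup -- so that Poincar\'e duality, via the transfer computation of \cref{partA}, is available -- and then to a further subgroup whose index clears the exponent of $A$. Everything else is routine manipulation of finite-index subgroups.
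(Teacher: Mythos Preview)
Your proof is correct and follows essentially the same route as the paper: reduce to a central finite kernel, invoke the dichotomy for finitely generated Fuchsian groups (virtually free versus virtually a closed orientable surface group), split trivially in the free case since $H^2=0$, and in the surface case pass to a subgroup of index divisible by $\exp(A)$ and apply \cref{partA}. The only differences are organisational: the paper disposes of the virtually-free case before centralising, whereas you centralise first and then treat both cases; and your Step~1 spells out via $G_1=KC$ what the paper compresses into the single sentence ``replace $G$ by $C_G(K)$ and $K$ by $K\cap C_G(K)$''.
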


\begin{proof} If $G/K$ is virtually a free group then $G$ has a finite index subgroup $H$ containing $K$ such that $H/K$ is free, and therefore the extension $K\rightarrowtail H\twoheadrightarrow H/K$ is split and $H$, and hence $G$, is virtually free.

Now suppose that $G/K$ is virtually an orientable closed Fuchsian group. Again, we may replace $G$ by a finite index subgroup and assume that $G/K$ is a closed orientable Fuchsian group.
Since $K$ is finite and normal, its centraliser $C_G(K)$ has finite index in $G$. Replacing $G$ by the centraliser $C_G(K)$ of $K$ and $K$ by $K\cap C_G(K)$, we may assume that $K$ is central in $G$. Note that the new quotient $G/K$ is a finite index subgroup of an orientable closed Fuchsian group, and hence is itself an orientable closed Fuchsian group.

Closed orientable Fuchsian groups are $\PD{2}{\Z}$-groups, so \cref{partA} can be applied to the central extension $K\rightarrowtail G\twoheadrightarrow G/K$. Since $G/K$ has subgroups of any finite index we can choose a subgroup of index equal to the exponent of $K$. By \cref{partA} we obtain a split central extension and find a subgroup of finite index with the desired properties.
\end{proof}

\begin{prop}[Hyperbolic case]
\label{hyp case}
Every hyperbolic oriented $\PD 2 R$ group $G$ contains a closed Fuchsian group as a  finite index subgroup.

If additionally $G$ is torsion free, then $G$ is isomorphic to a closed Fuchsian group, and this closed Fuchsian group is orientable if and only if the action of $G$ on the circle forming its Gromov boundary is orientation preserving. 
\end{prop}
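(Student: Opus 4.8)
The plan is to recognise the Gromov boundary of $G$ as a circle and then to invoke the solution of the convergence group conjecture. First I would observe that, being a $\PD{2}{R}$ group, $G$ is infinite and has $H^0(G;RG)=H^1(G;RG)=0$; since a finitely generated group with more than one end splits over a finite subgroup and hence has non-trivial first cohomology with group-ring coefficients (by a Mayer--Vietoris argument), $G$ is one-ended. Next I would bring in Bestvina--Mess-type results on boundaries of hyperbolic groups: for a hyperbolic group of type $\typeFP{}$ over $R$ the reduced \v{C}ech cohomology of $\partial G$ satisfies $\check H^{k}(\partial G;R)\cong H^{k+1}(G;RG)$, and the Poincar\'e duality hypothesis moreover forces $\partial G$ to be a cohomology $1$-manifold over $R$ with the cohomology of a circle. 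Thus $\partial G$ is a compact, connected (one end), locally connected continuum which is a $1$-dimensional cohomology manifold over $R$; since $1$-dimensional cohomology manifolds are topological $1$-manifolds, $\partial G\cong S^1$.

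Next I would use that $G$ acts on $\partial G$ as a uniform convergence group, with kernel the maximal finite normal subgroup $K$ of $G$, so that $G/K$ acts faithfully on $S^1$ as a uniform convergence group. By the convergence group theorem of Tukia, Casson--Jungreis and Gabai, $G/K$ is topologically conjugate to a cocompact Fuchsian group; since $\partial(G/K)=\partial G\cong S^1$ is connected, $G/K$ is one-ended and not virtually free, so this cocompact Fuchsian group is a closed Fuchsian group. Applying \cref{partB} to the finite normal subgroup $K$ of $G$ then shows that $G$ is virtually Fuchsian, and the proof of \cref{partB} exhibits the required finite-index subgroup as a closed orientable surface group, which is in particular a closed Fuchsian group of finite index in $G$. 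This proves the first assertion.

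For the torsion-free refinement we have $K=1$, so $G$ itself acts faithfully on $\partial G\cong S^1$ as a uniform convergence group and is therefore topologically conjugate to a cocompact Fuchsian group $\Gamma\leqslant\PGL_2(\R)$; since $G$ is torsion free and one-ended, $\Gamma$ is a closed Fuchsian group and $G\cong\Gamma$. It remains to compare orientations. Fix the conjugating homeomorphism $\varphi\colon\partial G\to\R\cup\{\infty\}$, intertwining the boundary action of $G$ with the M\"obius action of $\Gamma$. The Fuchsian group $\Gamma$ is orientable precisely when $\Gamma\leqslant\PSL_2(\R)$, equivalently when $\Gamma$ acts on $\R\cup\{\infty\}$ preserving orientation; and since conjugation by any self-homeomorphism of the circle sends orientation-preserving homeomorphisms to orientation-preserving homeomorphisms, this holds if and only if $G$ acts on its Gromov boundary $\partial G$ preserving orientation, exactly as in the discussion preceding \cref{partB}.

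The heart of the argument is the identification $\partial G\cong S^1$, and this is where I expect the main difficulty to lie. When $R=\Z$ it is essentially immediate: $G$ is then torsion free and one invokes the original Bestvina--Mess theorem together with the classical fact that a $1$-dimensional homology manifold is a topological $1$-manifold. For a general ring $R$, however, $G$ may have torsion---cocompact hyperbolic triangle groups can be oriented $\PD{2}{R}$ groups over suitable fields---and one then needs the version of the Bestvina--Mess correspondence valid for hyperbolic groups with torsion, carried out on the Rips complex (a finite-dimensional contractible complex on which $G$ acts cocompactly with finite stabilisers), whose compactly supported cohomology over $R$ computes $H^\ast(G;RG)$. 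The appeal to the convergence group theorem is a heavy but standard external input, after which \cref{partA,partB} merely do the bookkeeping needed to pass between $G$ and $G/K$.
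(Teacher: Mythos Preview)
Your proposal is correct and follows essentially the same route as the paper: identify $\partial G\cong S^1$ via Bestvina's work, use that the boundary action is a convergence action with finite kernel $K$, apply the Tukia--Gabai--Casson--Jungreis theorem to $G/K$, and then invoke \cref{partB}. The only cosmetic differences are that the paper cites Bestvina's result directly rather than unpacking the one-ended/cohomology-manifold argument, and the paper excludes the virtually free case via the $R$-cohomological dimension of $G$ rather than via one-endedness of $G/K$; your treatment of the orientation clause is likewise equivalent to the paper's.
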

\begin{proof}
 By \cite{Bestvina1996}*{Theorem 2.8 and Remark 2.9} of Bestvina, the Gromov boundary of $G$ is a closed $1$-manifold, that is, the circle. The action of a Gromov hyperbolic group on its boundary is a convergence action by a result of Bowditch~\cite{Bowditch1999}*{Lemma 2.11}.
 
 The action of $G$ (when $G$ is non-elementary, which is the case here) on its boundary has finite kernel $K$. This is a folklore result, which can be proven as follows: the boundary has at least three distinct points \cite{GdlH90}*{Chapter 7, Proposition-Definition 15}, and if an element of $G$ fixes more than two points in the boundary, then it is elliptic \cite{GdlH90}*{Chapter 8, Theorems 16 and 17} and hence of finite order \cite{GdlH90}*{Chapter 8, Proposition 28}. Now every torsion subgroup of a hyperbolic group is finite~\cite{GdlH90}*{Chapter 8, Corollary 36}.
 
The convergence theorem of Tukia, Gabai, and Casson--Jungreis \cites{Tukia1988,Gabai1992,CassonJungreis1994} informs us that $G/K$ is isomorphic to a Fuchsian group in a way taking the action of $G/K$ on its boundary to the canonical action of the Fuchsian group on the circle.
We now apply \cref{partB}, and conclude that $G$ is virtually Fuchsian (and finitely generated). Since virtually free groups are not $\PD 2 R$ as their $R$-cohomological dimension is either $1$ or infinity, we see that $G$ is virtually a closed Fuchsian group.

\smallskip
Now suppose that $G$ is torsion free. Then $K$ is trivial, and so $G$ is torsion free, finitely generated, and Fuchsian. It follows that $G$ is free or a closed Fuchsian group. But $G$ is $\PD 2 R$, and hence it cannot be free.

The canonical action of a Fuchsian group on its boundary is orientation preserving if and only if the group is a subgroup of $\PSL_2(\R)$, that is, if and only if it is an orientable Fuchsian group.
 \end{proof}

\begin{thm}
Every orientable $\PD 2 \Z$ group $G$ is isomorphic to the fundamental group of a closed orientable surface of positive genus.
\end{thm}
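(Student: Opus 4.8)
The plan is to feed $G$ into the Tits alternative of \cref{tits} and then dispatch the two resulting cases via \cref{amen case} and \cref{hyp case}. I would start with two preliminary observations: since $G$ is a $\PD 2 \Z$ group it has finite cohomological dimension over $\Z$ (in fact exactly $2$), hence is torsion-free; and $G$ is non-trivial---the trivial group is not a $\PD 2 \Z$ group---hence infinite. As $\Z$ is commutative, \cref{tits} applies and $G$ is either amenable or non-elementary hyperbolic.

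In the amenable case, \cref{amen case} gives $G \cong \Z \rtimes \Z$, that is, $G \cong \Z \rtimes_\phi \Z$ for some $\phi \in \Aut(\Z) = \{\pm 1\}$. If $\phi = -1$ then $G$ is the Klein bottle group, i.e.\ the fundamental group of the (non-orientable) Klein bottle, and therefore is not an oriented $\PD 2 \Z$ group, contradicting the hypothesis. Hence $\phi$ is trivial and $G \cong \Z^2 \cong \pi_1(T^2)$, the fundamental group of a closed orientable surface of genus $1$.

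In the non-elementary hyperbolic case, $G$ is torsion-free, hyperbolic and oriented, so \cref{hyp case} identifies $G$ with a closed Fuchsian group, and moreover this closed Fuchsian group is orientable if and only if the $G$-action on the circle forming its Gromov boundary is orientation-preserving. A torsion-free closed Fuchsian group acts freely, properly discontinuously and cocompactly on $\mathbb{H}^2$, hence is the fundamental group of a closed surface $\Sigma$ with $\chi(\Sigma) < 0$. I would then identify the $\PD 2 \Z$-orientation module $H^2(G;\Z G)$ of $G \cong \pi_1(\Sigma)$ with the orientation module of the aspherical manifold $\Sigma$---via the canonical isomorphism $H^2(\pi_1\Sigma;\Z[\pi_1\Sigma]) \cong H^2_c(\widetilde\Sigma;\Z)$ and the deck action---so that, $G$ being oriented, $\Sigma$ is orientable; having negative Euler characteristic, $\Sigma$ has genus at least $2$. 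Combining the two cases, $G$ is the fundamental group of a closed orientable surface of positive genus.

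The step I expect to be the main obstacle is precisely this last orientation bookkeeping in the hyperbolic case: verifying that ``oriented $\PD 2 \Z$ group'' is equivalent, under the identification of \cref{hyp case}, to ``$G$ acts on $\partial G \cong S^1$ by orientation-preserving homeomorphisms'', equivalently to orientability of the surface underlying the closed Fuchsian group. The comparison of orientation modules of $\Sigma$ and $\pi_1(\Sigma)$ sketched above settles it once $G$ has been realised as a surface group; alternatively one can argue more intrinsically via the Bestvina--Mess $Z$-set compactification of a cocompact model of $EG$, which produces a $G$-equivariant isomorphism $H^2(G;\Z G) \cong \widetilde H^1(\partial G;\Z)$ on which $G$ acts through the degree of its action on $\partial G$.
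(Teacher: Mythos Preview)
Your proof is correct and follows essentially the same route as the paper: torsion-freeness from finite cohomological dimension, then \cref{tits}, then \cref{amen case} and \cref{hyp case}, with the same handling of the $\Z\rtimes\Z$ dichotomy in the amenable case. For the orientation step in the hyperbolic case the paper goes directly through Bestvina's $\Z G$-module isomorphism $H^1(\partial G;\Z)\cong H^2(G;\Z G)$ (your ``alternative'' route), whereas your primary argument via $H^2_c(\widetilde\Sigma;\Z)$ after realising $G$ as $\pi_1(\Sigma)$ is an equally valid variant.
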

\begin{proof}
Observe that $G$ is torsion free, since it has finite cohomological dimension over $\Z$.
We use \cref{tits} to conclude that $G$ is either amenable or non-elementary hyperbolic. In the first case, \cref{amen case} tells us that $G$ is isomorphic to $\Z \rtimes \Z$. If the semi-direct product  structure is non-trivial, then the group is a non-orientable $\PD 2 \Z$ group, and in particular it is not an orientable $\PD 2 \Z$ group. Hence $G \cong \Z^2$. If $G$ is hyperbolic, then \cref{hyp case} tells us directly that $G$ is an orientable closed Fuchsian group, provided that we can show that the action of $G$ on its Gromov boundary $\partial G$ is orientation preserving. This is the case, since Bestvina~\cite{Bestvina1996}*{Proposition 1.5} gives us a $\Z G$-module isomorphism $H^1(\partial G; \Z) \cong H^2(G;\Z G)$, and hence $G$ being an orientable $\PD 2 \Z$ group translates directly into the action of $G$ on $H^1(\partial G; \Z)$ being trivial, which is the case precisely when the action of $G$ on $\partial G$ is orientation preserving.
\end{proof}


\bibliography{bibpd2}

\end{document}